\providecommand{\U}[1]{\protect\rule{.1in}{.1in}}
\theoremstyle{plain}
\newtheorem{corollary}{Corollary}
\newtheorem{definition}{Definition}
\newtheorem{example}{Example}
\newtheorem{lemma}{Lemma}
\newtheorem{proposition}{Proposition}
\newtheorem{remark}{Remark}
\newtheorem{theorem}{Theorem}
\numberwithin{equation}{section}
\begin{document}
\title[{\normalsize On weakly classical 1-absorbing prime submodules}]{{\normalsize On weakly classical 1-absorbing prime submodules}}
\author{Zeynep Y\i lmaz U\c{c}ar}
\address{Department of Mathematics, Yildiz Technical University, Istanbul, Turkey;
Orcid: 0009-0009-8107-504X}
\email{zeynepyilmaz@hotmail.com.tr}
\author{Bayram Ali Ersoy}
\address{Department of Mathematics, Yildiz Technical University, Istanbul, Turkey;
Orcid: 0000-0002-8307-9644}
\email{ersoya@yildiz.edu.tr}
\author{\"{U}nsal Tekir}
\address{Department of Mathematics, Marmara University, Istanbul, Turkey;\\
Orcid: 0000-0003-0739-1449}
\email{utekir@marmara.edu.tr}
\author{Suat Ko\c{c}}
\address{Department of Mathematics, Istanbul Medeniyet University, Istanbul, Turkey;
Orcid: 0000-0003-1622-786X }
\email{suat.koc@medeniyet.edu.tr}
\author{Serkan Onar}
\address{Department of Mathematical Engineering, Yildiz Technical University, Istanbul,
Turkey; Orcid: 0000-0003-3084-7694}
\email{serkan10ar@gmail.com}
\subjclass[2000]{13A15, 13C05,13C13.}
\keywords{weakly classical prime submodules, weakly classical 1-absorbing prime
submodules, weakly classical 2-absorbing submodules}

\begin{abstract}
In this paper, we study weakly classical 1-absorbing prime submodules of a
nonzero unital module $M\ $over a commutative ring $R$ having a nonzero
identity. A proper submodule $N$ of $M$ is said to be a weakly classical
1-absorbing prime submodule, if for each $m\in M$ and nonunits $a,b,c\in R,$
$0\neq abcm\in N$ implies that $abm\in N$ or $cm\in N$. We give various
examples and properties of weakly classical 1-absorbing prime submodules.
Also, we investiage the weakly classical 1-absorbing prime submodules of
tensor product$\ F\otimes M$ of a (faithfully) flat $R$-module $F$ and any
$R$-module $M.$ Also, we prove that if every proper submodule of an $R$-module
$M$ is weakly classical 1-absorbing prime, then $Jac(R)^{3}M=0.\ $In terms of
this result, we characterize modules over local rings in which every proper
submodule is weakly classical 1-absorbing prime.$\ $

\end{abstract}
\maketitle

\section{\bigskip Introduction}

In this article, all rings under consideration are assumed to be commutative
with nonzero identity and all modules are assumed to be nonzero unital. Let
$R$ always denote such a ring and $M$ denote such an $R$-module. A proper
submodule $N$ of $M\ $is called a \textit{prime submodule} if whenever $xm\in
N$ for some $x\in R$ and $m\in M,\ $then $x\in(N:_{R}M)$ or $m\in N,\ $where
$(N:_{R}M)$ is the annihilator $ann(M/N)\ $of $R$-module $M/N$, that is,
$(N:_{R}M)=\{x\in R:xM\subseteq N\}.\ $In this case, $(N:_{R}M)$ is a prime
ideal of $R.\ $However, the converse is not true in general. For instance,
consider $%
\mathbb{Z}
$-module $%
\mathbb{Z}
\oplus%
\mathbb{Q}
$ and note that $N=2%
\mathbb{Z}
\times%
\mathbb{Z}
$ is not a prime submodule since $2(0,\frac{1}{2})\in N$, $2\notin(N:_{%
\mathbb{Z}
}%
\mathbb{Z}
\oplus%
\mathbb{Q}
)$ and $(0,\frac{1}{2})\notin N$. But $(N:_{%
\mathbb{Z}
}%
\mathbb{Z}
\oplus%
\mathbb{Q}
)=0$ is a prime ideal of $%
\mathbb{Z}
$. Several authors have extended the notion of prime ideals to modules (See
\cite{X10,X13,X14}). In 2004, Behboodi and Koohy introduced the concept of
weakly prime submodules (See, \cite{X9}, \cite{X10} and \cite{XX}). According
to \cite{XX}, a proper submodule $N$ of $M$ is called a \textit{weakly prime}
\textit{submodule }if whenever $xym\in N$ for some $x,y\in R$ and $m\in M$,
then $xm\in N$ or $ym\in P$.\ By definition, it is clear that every prime
submodule is also weakly prime but the converse is not true in general. For
instance, the submodule $N=2%
\mathbb{Z}
\oplus0\ $of $%
\mathbb{Z}
$-module $%
\mathbb{Z}
\oplus%
\mathbb{Q}
$ is a weakly prime submodule which is not prime. It can be easily seen that a
proper submodule $N$ of $M\ $is weakly prime if and only if $(N:_{R}m)=\{x\in
R:xm\in N\}$ is a prime ideal of $R$ for every $m\notin N$. So far, the
concept of weakly prime submodules has drawn attention of many authors and has
been studied in many papers. For example, Azizi in his paper \cite[Lemma 3.2
and Theorem 3.3]{Azizi} studied the weakly prime submodules of tensor product
$F\otimes M$ for a flat (faithfully flat) $R$-module $F$ and any $R$-module
$M$ (See, \cite[Lemma 3.2 and Theorem 3.3]{Azizi}). Here, we should mention
that after some initial works, many authors studied the same notion under the
name "classical prime submodule", and in this paper we prefer to use classical
prime submodule instead of weakly prime submodule. In 2016, Mostafanasab et
al. gave a new generalization of classical prime submodules which is called
weakly classical prime submodule as follows: a proper submodule $N\ $of
$M\ $is called a weakly classical prime submodule if whenever $0\neq abm\in N$
for some $a,b\in R$ and $m\in M,\ $then $am\in N$ or $bm\in N$
\cite{Mostafanasab}$.\ $It is clear that zero submodule in any module is
trivially weakly classical prime. However, zero submodule need not be
classical prime in general. For instance, take the zero submodule
$N=(\overline{0})$ of $%
\mathbb{Z}
$-module $%
\mathbb{Z}
_{9}.\ $Then $N\ $is weakly classical prime which is not classical prime.

In a recent study, Yassine et al. gave a new generalization of prime ideal
which is called 1-absorbing prime ideal and they characterized local rings in
terms of 1-absorbing prime ideals. A proper ideal $Q$ of $R\ $is said to be a
1-absorbing prime ideal if whenever $xyz\in Q$ for some nonunits $x,y,z\in
R,\ $then $xy\in Q$ or $z\in Q$ \cite{Yassine}. Afterwards, Ko\c{c} et al.
defined the concept of weakly 1-absorbing prime ideals as follows: a proper
ideal $Q$ of $R\ $is said to be a weakly 1-absorbing prime if whenever $0\neq
xyz\in Q$ for some nonunits $x,y,z\in R,\ $then $xy\in Q$ or $z\in
Q\ $\cite{Eda}. Among many results in the paper \cite{Eda}, the authors
studied weakly 1-absorbing prime ideals in the ring of real valued continuous
functions $C(X)\ $on a topological space $X,\ $and also characterized rings in
which every proper ideal is weakly 1-absorbing prime. According to
\cite[Theorem 14]{Eda}, every proper ideal of a ring $R\ $is a weakly
1-absorbing prime if and only if $(R,M)$ is a local ring with $M^{3}=0\ $or
$R\ $is a direct product of two fields. Motivated by the studies of \cite{Eda}
and \cite{Mostafanasab}, in this paper we introduce and study weakly classical
1-absorbing prime submodules of modules over commutative rings. A proper
submodule $N$ of $M\ $is said to be a \textit{weakly} \textit{classical
1-absorbing prime submodule} if whenever $0\neq xyzm\in N$ for some nonunits
$x,y,z\in R$ and $m\in M,\ $then $xym\in N$ or $zm\in N.$ For the sake of
completeness, now we will give some notions and notations which will be
followed in the sequel. Let $R$ be a ring and $M\ $be an $R$-module. For every
element $x\in R$\ $(m\in M),\ $principal ideal (cyclic submodule)\ generated
by $x\in R\ $($m\in M)$ is denoted by $Rx$ ($Rm).\ $Also, for any subset $K$
of $M$ and any submodule $N$ of $M,\ $the residual of $N\ $by $K$\ will be
designated by $(N:_{R}K)=\{x\in R:xK\subseteq P\}.\ $In particular, if $N=(0)$
is the zero submodule and $K=\{m\}$ is a singleton for some $m\in M,\ $we
prefer to use $Ann_{R}(m)\ $instead of $((0):_{R}K).\ $Also, we prefer to use
$Ann_{R}(M)\ $to denote the annihilator of $M\ $instead of $((0):_{R}M)$
\cite{Sharp}. Recall from \cite{Sharp} that $M\ $is said to be a
\textit{faithful module} if $Ann_{R}(M)\ $is the zero ideal of $R.\ $Recall
from \cite{Barnard} that an $R$-module $M\ $is said to be a
\textit{multiplication module} if every submodule $N$ of $M\ $has the form
$IM$ for some ideal $I$ of $R.\ $In this case, $N=(N:_{R}M)M.\ $Ameri in his
paper \cite{Ameri} defined product of two submodules of multiplication modules
as follows: Let $M\ $be a multiplication module and $N=IM,\ K=JM$ be two
submodules of $M.\ $Then the product of $N\ $and $K\ $is defined by
$NK=IJM.\ $This product is independent of the represetantions of $N\ $and
$K\ $(See, \cite[Theorem 3.4]{Ameri}). In particular, we use $Nm$ to denote
the product of $N\ $and $Rm\ $for every submodule $N$ of $M\ $and $m\in M$.
Also for any submodule $N$ of $M\ $and any subset $J$ of $A$, the residual of
$N$ by $J$ is defined by $(N:_{M}J)=\{m\in M:Jm\subseteq N\}.\ $

Among other results in this paper, we investigate the relations between weakly
classical 1-absorbing prime submodules and weakly 1-absorbing prime ideals
(See, Theorem \ref{theo1}, Theorem \ref{text2} and Corollary \ref{cext1}). We
investigate the stability of weakly classical 1-absorbing prime submodules
under homomorphism, in localization and in factor modules (See, Theorem
\ref{thom}, Corollary \ref{cfac}, Corollary \ref{cfac2}, Theorem
\ref{tloc}).\ Recall from that \cite{Nico} that a proper submodule $N$ of
$M\ $is said to be a weakly 1-absorbing prime submodule if whenever $0\neq
abm\in N$ for some nonunits $a,b\in R$ and $m\in M,\ $then $ab\in(N:_{R}%
M)\ $or $m\in N.$ Recall from \cite{Ebra} that a proper submodule $N$ of
$M\ $is said to be a weakly semiprime if for each $a\in R$ and $m\in M,\ 0\neq
a^{2}m\in N$ implies that $am\in N.$ An $R$-module $M$ is said to be a reduced
module if whenever $a^{2}m=0$ for some $a\in R$ and $m\in M,\ $then
$am=0.\ $In Proposition \ref{pro1}, we investigate the relations between
weakly classical 1-absorbing prime submodules and other classical submodules
such as weakly 1-absorbing prime submodules, weakly classical prime submodules
and weakly semiprime submodules. Also, we prove that in any cyclic module,
weakly 1-absorbing prime submodules and weakly classical 1-absorbing prime
submodules are equal (See, Proposition \ref{pcyc}). We give very long two
lists whose all items are characterizing weakly classical 1-absorbing prime
submodules (See, Theorem \ref{tmain1} and Theorem \ref{tmain2}). Also, we
determine weakly classical 1-absorbing prime submodules of multiplication
modules (Theorem \ref{tmult}). In algebra, Prime avoidance lemma states that
if an ideal is contained in a finite union of prime ideals, then it must be
contained in one of those primes. If we replace prime ideals with any ideals,
then the claim is not true in general. For instance, consider $R=%
\mathbb{Z}
_{2}[x,y,z]/(x,y,z)^{2}$ and $I=\{\overline{0},\overline{x},\overline
{y},\overline{x+y}\}.\ $Let $P=\{\overline{0},\overline{x}\},\ Q=\{\overline
{0},\overline{y}\}$ and $J=\{\overline{0},\overline{x+y}\}.\ $Then $I\subseteq
P\cup Q\cup J$ and $I$ is not contained in one of the ideals $P,Q,J\ $of
$R.\ $One can naturally ask for which rings, a similar version of prime
avoidance lemma holds for any ideals: "When does $I\subseteq%
{\textstyle\bigcup\limits_{k=1}^{n}}
J_{k}\ $imply $I\subseteq J_{k}\ $for some $1\leq k\leq n\ $".\ In 1975,
Quartararo and Butts investigated the algebraic properties of such rings.
According to \cite{Quartararo} a ring $R\ $is said to be a $u$-ring if an
ideal is contained in a finite union of ideals, then it must be contained in
one of those ideals. Also, $R\ $is called a $um$-ring if $M\ $is an $R$-module
and $M=%
{\textstyle\bigcup\limits_{i=1}^{n}}
N_{i}\ $for some submodules $N_{i}\ $of $M,\ $then $M=N_{i}\ $for some $1\leq
i\leq n.$\ The authors in \cite{Quartararo} showed that Pr\"{u}fer domains and
B\.{e}zout rings are examples of u-rings. We give a characterization of weakly
classical 1-absorbing prime submodules of modules over $um$-rings (See,
Theorem \ref{tmain2}). Furthermore, we investigate the weakly classical
1-absorbing prime submodules of direct product of modules (See, Theorem
\ref{tcar1} and Theorem \ref{tcar2}). Recall from \cite{Larsen} that an
$R$-module $F$ is said to be a flat $R$-module if for each exact sequence
$K\rightarrow L\rightarrow M$ of $R$-modules, the sequence $F\otimes
K\rightarrow F\otimes L\rightarrow F\otimes M$ is also exact. Also, $F\ $is
said to be a faithfully flat, the sequence $K\rightarrow L\rightarrow M\ $is
exact if and only if $F\otimes K\rightarrow F\otimes L\rightarrow F\otimes M$
is exact. Azizi in \cite[Lemma 3.2]{Azizi} showed that if $M\ $is an
$R$-module, $N\ $is a submodule of $M\ $and $F\ $is a flat $A$-module, then
$(F\otimes N:_{F\otimes M}a)=F\otimes(N:_{M}a)$ for every $a\in R.$ In Theorem
\ref{theo16}, we characterize weakly classical 1-absorbing prime submodules of
tensor product$\ F\otimes M$ of a (faithfully) flat $R$-module $F$ and any
$R$-module $M.$ Recall from \cite{Sharp} that the intersection of all maximal
ideals of a ring $R\ $is denoted by $Jac(R)$ which is called the Jacobson
radical of $R.\ $Finally, we show that for any $R$-module $M,\ $if every
proper submodule of $M\ $is weakly classical 1-absorbing prime submodule, then
$Jac(R)^{3}M=0.\ $In terms of this result, for any module $M\ $over a local
ring $(R,\mathfrak{m}),\ $every proper submodule of $M\ $is weakly classical
1-absorbing prime if and only if $\mathfrak{m}^{3}M=0$ (See, Theorem
\ref{tfinal}).$\ $

\section{Characterization of weakly classical 1-absorbing prime submodules}

Throghout this section, $M\ $will denote a nonzero unital module over a
commutative ring $R\ $having a nonzero identity.

\begin{definition}
A proper submodule $N$ of an $R$-module $M$ is called a weakly classical
1-absorbing prime if whenever $0\neq abcm\in N\ $for some nonunits $a,b,c\in
R$ and $m\in M$, then $abm\in N$ or $cm\in N$.
\end{definition}

Recall from \cite{Zeynep} that a proper submodule $N$ of $M$ is said to be a
classical 1-absorbing prime submodule if $abcm\in N\ $for some nonunits
$a,b,c\in R$ and $m\in M$ implies that $abm\in N$ or $cm\in N.\ $By
definition, it is clear that every classical 1-absorbing submodule is also
weakly classical 1-absorbing submodule. However, the converse is not true in
general. See the following example.

\begin{example}
\textbf{(Weakly classical 1-absorbing prime }$\nRightarrow$\textbf{ Classical
1-absorbing prime)} Let $p,q,r$ be three distinct prime numbers and consider $%
\mathbb{Z}
$-module $M=%
\mathbb{Z}
_{pqr}.\ $Suppose that $N=(\overline{0})$. Then $N\ $is clearly a weakly
classical 1-absorbing. Since $p\cdot q\cdot r\cdot\overline{1}\in N$,\ $p\cdot
q\cdot\overline{1}\notin N$ and $r\cdot\overline{1}\notin N,\ $it follows that
$N\ $is not classical 1-absorbing prime submodule.
\end{example}

\begin{theorem}
\label{theo1} Let $M$ be an $R$-module and $N$ a proper submodule of $M$. If
$(N:_{R}m)$ is a weakly 1-absorbing prime ideal of $R$\ for each $m\in
M\diagdown N,\ $then $N$ is a weakly classical 1-absorbing prime submodule of
$M.$ The converse is true provided that $Ann_{R}(m)=0.$
\end{theorem}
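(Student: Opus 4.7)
The plan is to prove both implications directly from the definitions, with the only subtlety being the transfer of the ``nonzero'' condition between the ideal $(N:_{R}m)$ and the submodule $N$.

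For the forward direction, I would assume $(N:_{R}m)$ is weakly 1-absorbing prime for every $m \in M \setminus N$, and take nonunits $a,b,c \in R$ and $m \in M$ with $0 \neq abcm \in N$. The case $m \in N$ is trivial since then $abm, cm \in N$, so I may assume $m \notin N$. Then by hypothesis $(N:_{R}m)$ is a weakly 1-absorbing prime ideal, and I would verify that $abc \in (N:_{R}m)$ with $abc \neq 0$ (for if $abc = 0$ then $abcm = 0$, contradicting the assumption). Applying the weakly 1-absorbing prime property of $(N:_{R}m)$ to the triple $(a,b,c)$ yields $ab \in (N:_{R}m)$ or $c \in (N:_{R}m)$, which translates back to $abm \in N$ or $cm \in N$.

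For the converse, assume $N$ is weakly classical 1-absorbing prime, fix $m \in M \setminus N$ with $\mathrm{Ann}_{R}(m) = 0$, and first note that $(N:_{R}m)$ is proper (otherwise $1 \cdot m = m \in N$). Given nonunits $a,b,c \in R$ with $0 \neq abc \in (N:_{R}m)$, I would push this into $N$ by multiplying by $m$: certainly $abcm \in N$, and since $abc \neq 0$ together with $\mathrm{Ann}_{R}(m) = 0$ forces $abcm \neq 0$, I can apply the weakly classical 1-absorbing prime property of $N$ to conclude $abm \in N$ or $cm \in N$, i.e.\ $ab \in (N:_{R}m)$ or $c \in (N:_{R}m)$.

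The main (and essentially only) obstacle is the asymmetric handling of the zero condition. In the forward direction the implication $abcm \neq 0 \Rightarrow abc \neq 0$ is automatic, but in the converse one needs to go the other way, $abc \neq 0 \Rightarrow abcm \neq 0$, which is precisely where the hypothesis $\mathrm{Ann}_{R}(m) = 0$ is used; without it, $abc$ could be a nonzero annihilator of $m$ and break the translation. Everything else is bookkeeping with the definitions of $(N:_{R}m)$ and of the two weak $1$-absorbing notions.
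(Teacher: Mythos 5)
Your proof is correct and follows essentially the same route as the paper: in the forward direction you pass from $0\neq abcm\in N$ to $0\neq abc\in(N:_{R}m)$ and apply the ideal hypothesis, and in the converse you use $\mathrm{Ann}_{R}(m)=0$ to guarantee $abcm\neq 0$ before invoking the submodule hypothesis. Your explicit handling of the case $m\in N$ and the remark that $(N:_{R}m)$ is proper are small points of care that the paper's proof glosses over, but the argument is the same.
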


\begin{proof}
Let $(N:_{R}m)$ be a weakly 1-absorbing prime ideal of $R$\ for each $m\in
M\diagdown N,\ $and choose $0\neq abcm\in N$ for some nonunits $a,b,c\in R$
and $m\in M.\ $Then note that $0\neq abc\in(N:_{R}m)$ and $m\in m\in
M\diagdown N.\ $By assumption, we have $ab\in(N:_{R}m)$ or $c\in(N:_{R}m)$,
that is, $abm\in N$ or $cm\in N.\ $For the converse, suppose that $N$ is a
weakly 1-absorbing submodule of $M,$ and choose $m\in M\diagdown N$ such that
$Ann_{R}(m)=0.\ $Let $0\neq abc\in(N:_{R}m)$ for some nonunits $a,b,c\in R$.
Then we get $0\neq abcm\in$ $N$. Since $N$ is a weakly 1-absorbing submodule
of $M$, we have either $abm\in N$ or $cm\in N$. Hence $ab\in(N:_{R}m)$ or
$c\in(N:_{R}m)$. Consequently, $(N:m)$ is a weakly 1-absorbing prime ideal of
$R$.
\end{proof}

In the converse of previous theorem, the condition " $Ann_{R}(m)=0$" is
necessary. On the other words, even if $N\ $is a weakly classical 1-absorbing
prime submodule of $M$ and $m\in M\diagdown N$ with $Ann_{R}(m)\neq0,\ $then
$(N:_{R}m)$ may not be weakly 1-absorbing prime ideal of $R.$ See, the
following example.

\begin{example}
Let $p,q$ be two distinct prime numbers and consider $%
\mathbb{Z}
$-module $M=%
\mathbb{Z}
_{p^{2}q^{2}}.\ $Let $N=(\overline{0})$ and $m=\overline{p}.\ $Then note that
$N\ $is a weakly classical 1-absorbing prime submodule and $m\in M\diagdown N$
with $Ann_{%
\mathbb{Z}
}(m)\neq0.\ $However, $(N:_{%
\mathbb{Z}
}M)=pq^{2}%
\mathbb{Z}
$ is not a weakly 1-absorbing prime ideal of $%
\mathbb{Z}
.$
\end{example}

Let $M$ be an $R$-module. Recall from \cite{Sharp} that the set of all torsion
elements is denoted by $T_{R}(M)=\{m\in M:Ann_{R}(m)=0\}.\ $It is worthy to
note that $T_{R}(M)$ is not always a submodule of $M.$ For instance, if we
consider $R=%
\mathbb{Z}
\times%
\mathbb{Z}
$-module $M=%
\mathbb{Q}
\times%
\mathbb{Q}
,\ $then clearly $T_{R}(M)=\left(  0\right)  \times%
\mathbb{Q}
\cup%
\mathbb{Q}
\times\left(  0\right)  $ is not a submodule of $M.\ $If $T_{R}(M)=M,\ M$ is
said to be a torsion module. If there exists $m\in M-T_{R}(M),\ $then we say
that $M$ is a non-torsion module.

\begin{theorem}
\label{text2}Let $M$ be a non-torsion module and $N\ $a proper submodule of
$M$ such that $T_{R}(M)\subseteq N.\ $Then $N\ $is a weakly classical
1-absorbing submodule of $M$ if and only if $(N:_{R}m)$ if a weakly
1-absorbing prime ideal of $R\ $for each $m\in M\diagdown N.$
\end{theorem}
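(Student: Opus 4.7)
The plan is to reduce both implications to Theorem \ref{theo1}, using the containment $T_R(M)\subseteq N$ as the bridge that supplies the annihilator hypothesis that the converse part of that theorem requires. Notice first that the direction $(\Leftarrow)$ — if each $(N:_R m)$ with $m\in M\setminus N$ is a weakly 1-absorbing prime ideal then $N$ is a weakly classical 1-absorbing prime submodule — is literally the first assertion of Theorem \ref{theo1} and does not use any hypothesis on $T_R(M)$, so nothing new needs to be proved there.

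For the nontrivial direction $(\Rightarrow)$, I would fix an arbitrary $m\in M\setminus N$. Since $T_R(M)\subseteq N$, such an $m$ necessarily lies outside $T_R(M)$, and by the definition of $T_R(M)$ this forces $\mathrm{Ann}_R(m)=0$. At this point the converse half of Theorem \ref{theo1} applies verbatim: given nonunits $a,b,c\in R$ with $0\neq abc\in(N:_R m)$, the fact that $\mathrm{Ann}_R(m)=0$ guarantees $0\neq abcm\in N$, so the weakly classical 1-absorbing property of $N$ gives $abm\in N$ or $cm\in N$, i.e.\ $ab\in(N:_R m)$ or $c\in(N:_R m)$. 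Hence $(N:_R m)$ is a weakly 1-absorbing prime ideal of $R$.

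The non-torsion hypothesis on $M$ is merely there to keep the quantification over $m\in M\setminus N$ from degenerating; if one had $T_R(M)=M$, then $T_R(M)\subseteq N$ together with $N$ being proper would already be incompatible. There is really no substantial obstacle in the argument — the content is that the global condition $T_R(M)\subseteq N$ upgrades the pointwise condition $\mathrm{Ann}_R(m)=0$ (which Theorem \ref{theo1} requires one element at a time) to hold uniformly on $M\setminus N$, after which Theorem \ref{theo1} does the rest.
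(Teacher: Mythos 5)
Your proposal is correct and follows essentially the same route as the paper: the containment $T_R(M)\subseteq N$ gives $M\setminus N\subseteq M\setminus T_R(M)$, hence $\mathrm{Ann}_R(m)=0$ for every $m\in M\setminus N$, and both directions then reduce to Theorem \ref{theo1}. Nothing further is needed.
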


\begin{proof}
Assume that $T_{R}(M)\subseteq N.\ $Then note that $M-N\subseteq M-T_{R}(M)$
which means that $Ann_{R}(m)=0$ for each $m\in M-N.\ $The rest follows from
Theorem \ref{theo1}.
\end{proof}

\begin{corollary}
\label{cext1}Let $R\ $be a ring and $I\ $a proper ideal of $R.\ $Then $I\ $is
a weakly classical 1-absorbing prime submodule of $R$-module $R\ $if and only
if $I\ $is a weakly 1-absorbing prime ideal of $R.$
\end{corollary}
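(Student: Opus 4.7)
The plan is to prove both directions directly from the definitions, exploiting two simple facts about the regular module $R$: the identity element $1$ has trivial annihilator, and multiplying any element by a nonunit produces a nonunit.

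For the $(\Rightarrow)$ direction, I would note that $1 \in R \setminus I$ since $I$ is proper, and $(I :_{R} 1) = I$ with $Ann_{R}(1) = 0$. Then the converse part of Theorem \ref{theo1}, applied at $m = 1$, immediately yields that $I = (I :_{R} 1)$ is a weakly 1-absorbing prime ideal of $R$. One could equally well argue directly: given nonunits $x,y,z \in R$ with $0 \neq xyz \in I$, plug $a=x$, $b=y$, $c=z$, $m=1$ into the weakly classical 1-absorbing prime submodule condition to conclude $xy \in I$ or $z \in I$.

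For the $(\Leftarrow)$ direction, the key trick is to repackage a product of four elements as a product of three nonunits. Given nonunits $a,b,c \in R$ and $m \in R$ with $0 \neq abcm \in I$, I set $a' = a$, $b' = b$, $c' = cm$; all three are nonunits, since $c$ being a nonunit forces $cm$ to be a nonunit (if $cm$ were a unit, then $c$ would divide $1$ and hence be a unit). Applying the weakly 1-absorbing prime ideal hypothesis to $0 \neq a'b'c' = abcm \in I$ yields $ab = a'b' \in I$ or $cm = c' \in I$. In the first case $abm \in I$ because $I$ is an ideal, and in the second case $cm \in I$ is already the desired conclusion.

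There is no real obstacle: the corollary is essentially a specialization of Theorem \ref{theo1} to $M = R$, $m = 1$, together with the elementary ring-theoretic observation that units cannot be written as multiples of nonunits. The only subtlety worth flagging is this nonunit status of $cm$, which is what lets the four-term product $abcm$ be absorbed into the three-variable definition of a weakly 1-absorbing prime ideal.
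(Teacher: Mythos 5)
Your proposal is correct and follows essentially the same route as the paper: the forward direction is Theorem \ref{theo1} applied at $m=1$ (or the direct substitution), and the backward direction uses the same regrouping $abcm=ab(cm)$ with $cm$ a nonunit. Your explicit check that $cm$ is a nonunit is a detail the paper leaves implicit, but the argument is identical.
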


\begin{proof}
Let $I$ be a weakly classical 1-absorbing prime submodule of $R$-module
$R.\ $Then by Theorem \ref{theo1}, we have $(I:_{R}1)=I$ is a weakly
1-absorbing prime ideal of $R.\ $Now, let $I\ $be a weakly 1-absorbing prime
ideal and choose nonunits $a,b,c\in R$ and $m\in R$ such that $0\neq
abcm=ab(cm)\in I.\ $Since $I\ $is a weakly 1-absorbing prime ideal, we have
$ab\in I$ or $cm\in I.\ $Which implies that $abm\in I\ $or $cm\in I.$
\end{proof}

\begin{theorem}
\label{thom} Let $M,M^{\prime}$ be two $R$-modules and $f:M\rightarrow
M^{\prime}$ be an $R$-homomorphism.

(1) Suppose that $f$ is a monomorphism and\ $N^{\prime}\ $is a weakly
classical 1-absorbing prime submodule of $M^{\prime}\ $with $f^{-1}\left(
N^{\prime}\right)  \neq M$, then $f^{-1}\left(  N^{\prime}\right)  $ is a
weakly classical 1-absorbing submodule of $M$.

(2) Suppose that $f$ is an epimorphism and\ $N$ is a weakly classical
1-absorbing prime submodule of \ $M$ containing $Ker(f)$, then $f\left(
N\right)  $ is a weakly classical 1-absorbing prime submodule of $M^{\prime}$.
\end{theorem}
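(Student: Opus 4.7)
The plan is to verify the defining property of a weakly classical 1-absorbing prime submodule directly in each case. The only delicate point is transferring the nonvanishing condition $0 \neq abcm$ across $f$; the hypotheses that $f$ is a monomorphism in (1), and an epimorphism with $\ker(f) \subseteq N$ in (2), are exactly what allow this.

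For part (1), I would start with nonunits $a, b, c \in R$ and $m \in M$ satisfying $0 \neq abcm \in f^{-1}(N')$. Applying $f$ gives $abc\, f(m) = f(abcm) \in N'$, and since $f$ is injective, $abcm \neq 0$ forces $f(abcm) \neq 0$. Invoking the weakly classical 1-absorbing prime property of $N'$ then yields $ab\, f(m) \in N'$ or $c\, f(m) \in N'$, equivalently $abm \in f^{-1}(N')$ or $cm \in f^{-1}(N')$. Properness of $f^{-1}(N')$ is a standing hypothesis.

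For part (2), properness of $f(N)$ comes for free: since $f$ is surjective and $\ker(f) \subseteq N$, one has $f(N) = M'$ iff $N + \ker(f) = M$ iff $N = M$, which is excluded. Now take nonunits $a, b, c \in R$ and $m' \in M'$ with $0 \neq abcm' \in f(N)$. By surjectivity, $m' = f(m)$ for some $m \in M$, and $abcm' = f(abcm)$ lies in $f(N)$; since $\ker(f) \subseteq N$, this pulls back to $abcm \in N$. Moreover, if $abcm = 0$ then $abcm' = f(abcm) = 0$, contradicting $abcm' \neq 0$. Thus $0 \neq abcm \in N$, so the weakly classical 1-absorbing prime property of $N$ supplies $abm \in N$ or $cm \in N$, and applying $f$ gives $abm' = f(abm) \in f(N)$ or $cm' = f(cm) \in f(N)$.

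Neither part presents a serious obstacle once the roles of the hypotheses on $f$ are made explicit; the only real thing to watch is that the ``nonzero'' condition in the definition must not be lost when passing back and forth between $M$ and $M'$. In (1) injectivity of $f$ handles this transparently, while in (2) the combination of surjectivity and $\ker(f) \subseteq N$ is what simultaneously secures the pullback $abcm \in N$ and its nonvanishing.
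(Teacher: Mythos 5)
Your proof is correct and follows essentially the same direct verification as the paper: push the nonzero relation forward through $f$ using injectivity in (1), and pull it back into $N$ using surjectivity together with $\ker(f)\subseteq N$ in (2). Your additional remark justifying properness of $f(N)$ is a small point the paper leaves implicit, but otherwise the arguments coincide.
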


\begin{proof}
$(1):\ $Suppose that $N^{\prime}$ is a weakly classical 1-absorbing prime
submodule of\ $M^{\prime}\ $with $f^{-1}\left(  N^{\prime}\right)  \neq M$.
Let $0\neq abcm\in f^{-1}\left(  N^{\prime}\right)  $ for some nonunits
$a,b,c\in R$ and $m\in M$. Since $f$ is monomorphism, $0\neq
f(abcm)=abcf(m)\in N^{\prime}$. Since $N^{\prime}$ is a weakly classical
1-absorbing prime submodule, we have $abf(m)\in N^{\prime}$ or $cf(m)\in
N^{\prime}$. Which implies that $f(abm)\in N^{\prime}$ or $f(cm)\in N^{\prime
}$. Therefore, $abm\in f^{-1}\left(  N^{\prime}\right)  $ or $cm\in
f^{-1}\left(  N^{\prime}\right)  $. Consequently,$\ f^{-1}\left(  N^{\prime
}\right)  $ is a weakly classical 1-absorbing prime submodule of $M$.

$(2):\ $Assume that $N$ is a weakly classical 1-absorbing prime submodule
of\ $M$. Let $a,b,c\in R$ be nonunits and $m^{\prime}\in M^{\prime}\ $be such
that $0\neq abcm^{\prime}\in f\left(  N\right)  $. By assumption, there exists
$m\in M$ such that $m^{\prime}=f(m)$ and so $0\neq abcm^{\prime}=f(abcm)\in
f(N)$. Since $Ker(f)\subseteq N$, we have $0\neq abcm\in N$. As $N$ is a
weakly classical 1-absorbing prime submodule of $M,$ we have either $abm\in N$
or $cm\in N$. Hence, $f(abm)=abm^{\prime}\in f(N)$ or $f(cm)=cm^{\prime}\in
f(N)$. Consequently, $f(N)$ is a weakly classical 1-absorbing prime submodule
of $M^{\prime}.$
\end{proof}

As an immediate consequence of Theorem \ref{thom} $\left(  2\right)  ,$ we
have the following explicit result.

\begin{corollary}
\label{cfac}Let $M$ be an $R$-module and $L\subseteq N$ be two submodules of
$M$. If $N$ is a weakly classical 1-absorbing prime submodule of $M$, then
$N/L$ is a weakly classical 1-absorbing prime submodule of $M/L$.
\end{corollary}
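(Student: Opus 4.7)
The plan is to obtain this corollary as a direct application of Theorem \ref{thom}(2) to the canonical quotient map. Let $\pi: M \to M/L$ denote the natural $R$-module homomorphism defined by $\pi(m) = m + L$. This map is clearly surjective, so $\pi$ is an epimorphism, and its kernel is precisely $\ker(\pi) = L$. By hypothesis, $L \subseteq N$, so the containment $\ker(\pi) \subseteq N$ demanded by Theorem \ref{thom}(2) is automatic.

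Next I would observe that $\pi(N) = \{n + L : n \in N\} = N/L$, and that $N/L$ is a \emph{proper} submodule of $M/L$: since $N$ is proper in $M$ with $L \subseteq N$, the correspondence theorem gives $N/L \neq M/L$. Thus all the hypotheses of Theorem \ref{thom}(2) are verified for this choice of $\pi$ and $N$.

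Applying Theorem \ref{thom}(2) then yields immediately that $\pi(N) = N/L$ is a weakly classical 1-absorbing prime submodule of $M/L$, which is the desired conclusion. There is no real obstacle in this argument; the only item that needs a sentence of care is the properness of $N/L$, since the weakly classical 1-absorbing prime property is defined only for proper submodules. All the substantive work — checking the implication $0 \neq abc(m+L) \in N/L$ forces $ab(m+L) \in N/L$ or $c(m+L) \in N/L$ — has already been carried out inside the proof of Theorem \ref{thom}(2), so no recalculation is needed here.
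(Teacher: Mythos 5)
Your proof is correct and matches the paper exactly: the paper states Corollary \ref{cfac} as an immediate consequence of Theorem \ref{thom}(2) applied to the canonical projection $\pi: M \to M/L$, whose kernel $L$ is contained in $N$. Your additional remark on the properness of $N/L$ is a sound detail that the paper leaves implicit.
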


The converse of previous corollary need not be true. See, the following example.

\begin{example}
Let $p$ be a prime number and consider $%
\mathbb{Z}
$-module $M=%
\mathbb{Z}
_{p^{3}}^{3}$. Let $L=%
\mathbb{Z}
_{p^{3}}\times(\overline{0})\times(\overline{0})=N.\ $Then $N/L$ is a zero
submodule which is clearly weakly classical 1-absorbing prime. Since
$p^{3}(\overline{1},\overline{1},\overline{1})\in N\ $and$\ p^{2}(\overline
{1},\overline{1},\overline{1})\notin N,\ $it follows that $N\ $is not a weakly
classical 1-absorbing prime submodule of $M.$
\end{example}

\begin{corollary}
\label{cfac2}Let $K$ and $N$ be two submodules of $M$ with $K\subseteq
N\subseteq M$. If $K$ is a weakly classical 1-absorbing prime submodule of $M$
and $N/K$ is a weakly classical 1-absorbing prime submodule of $M/K$, then $N$
is a weakly classical 1-absorbing prime submodule of $M$.
\end{corollary}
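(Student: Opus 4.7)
The plan is to verify the defining property of a weakly classical 1-absorbing prime submodule for $N$ directly, by a two-case dichotomy on whether the given element lies in $K$ or not. So I start by picking nonunits $a,b,c \in R$ and $m \in M$ with $0 \neq abcm \in N$, and I aim to conclude $abm \in N$ or $cm \in N$.

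First I would handle the case $abcm \in K$. Since $abcm \neq 0$ by hypothesis, the element $abcm$ is a nonzero element of $K$, and applying the assumption that $K$ is weakly classical 1-absorbing prime in $M$ yields $abm \in K$ or $cm \in K$. Because $K \subseteq N$, both outcomes give the desired conclusion.

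Next I would handle the case $abcm \notin K$. Then in the quotient module $M/K$, the coset $abc(m+K) = abcm+K$ is nonzero, and it lies in the submodule $N/K$ since $abcm \in N$. Applying the assumption that $N/K$ is weakly classical 1-absorbing prime in $M/K$, with the same nonunits $a,b,c$ and the element $m+K \in M/K$, yields $ab(m+K) \in N/K$ or $c(m+K) \in N/K$; that is, $abm+K \in N/K$ or $cm+K \in N/K$. In either case this forces $abm \in N$ or $cm \in N$, as required.

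Finally I would check that $N$ is indeed proper: this follows since $N/K$ is a proper submodule of $M/K$ by hypothesis, so $N \neq M$. No real obstacle is expected; the argument is essentially a routine case split, and the crucial point is simply that the hypothesis $abcm \neq 0$ is preserved intact in the first case (so that the weak hypothesis on $K$ can be applied), while in the second case the nonzeroness of the coset $abcm+K$ is automatic from $abcm \notin K$.
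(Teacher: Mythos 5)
Your proof is correct and follows essentially the same argument as the paper: the same case split on whether $abcm \in K$ (using that $K$ is weakly classical 1-absorbing prime, with $abcm\neq 0$) or $abcm \notin K$ (passing to the nonzero coset in $N/K$). The only addition is your explicit check that $N$ is proper, which the paper leaves implicit.
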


\begin{proof}
Assume that $K$ is a weakly classical 1-absorbing prime submodule of $M$ and
$N/K$ is a weakly classical 1-absorbing prime submodule of $M/K$. Let
$a,b,c\in R$ be nonunits and $m\in M$ with $0\neq abcm\in N$. Assume that
$abcm\in K$. Since $K$ is a weakly classical 1-absorbing prime submodule, we
get either $abm\in K\subseteq N$ or $cm\in K\subseteq N$. Now, assume that
$abcm\notin K$. Then we obtain $0\neq abc(m+K)\in N/K$. Since $N/K$ is a
weakly classical 1-absorbing prime submodule, we have $ab(m+K)\in N/K$ or
$c(m+K)\in N/K$ which implies that $abm\in N$ or $cm\in N$, as needed.
\end{proof}

\begin{theorem}
\label{tloc}Let $M$ \ be an $R$-module, $N$ be a submodule of $M\ $and $S$ be
a multiplicative subset of $R$. If $N$ is a weakly classical 1-absorbing prime
submodule of $M$ such that $\left(  N:_{R}M\right)  \cap S=\varnothing$, then
$S^{-1}N$ is a weakly classical 1-absorbing prime submodule of $S^{-1}M$.
\end{theorem}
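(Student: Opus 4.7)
The plan is to verify the defining property of a weakly classical 1-absorbing prime submodule for $S^{-1}N$ inside $S^{-1}M$, by clearing denominators to land inside $N$, applying the hypothesis on $N$ there, and then reinterpreting the conclusion back in $S^{-1}M$. First I would check that $S^{-1}N$ is a proper submodule of $S^{-1}M$; this is the standard consequence of the hypothesis $(N:_{R}M)\cap S=\varnothing$ and is what the disjointness condition is built to guarantee.

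Next, I would pick a candidate $\tfrac{a_{1}}{s_{1}}\cdot\tfrac{a_{2}}{s_{2}}\cdot\tfrac{a_{3}}{s_{3}}\cdot\tfrac{m}{s_{4}}\in S^{-1}N$ that is nonzero in $S^{-1}M$, with each $\tfrac{a_{i}}{s_{i}}$ a nonunit of $S^{-1}R$. A brief but important preliminary observation is that if some $a_{i}$ were a unit of $R$, then $\tfrac{a_{i}}{s_{i}}$ would be a unit of $S^{-1}R$; hence each $a_{i}$ is a nonunit in $R$, which is exactly what is needed to activate the hypothesis on $N$. From $\tfrac{a_{1}a_{2}a_{3}m}{s_{1}s_{2}s_{3}s_{4}}\in S^{-1}N$ I would extract $n\in N$, $s\in S$ with $\tfrac{a_{1}a_{2}a_{3}m}{s_{1}s_{2}s_{3}s_{4}}=\tfrac{n}{s}$, and a witness $u\in S$ giving $us\cdot a_{1}a_{2}a_{3}m=us_{1}s_{2}s_{3}s_{4}n\in N$. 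Setting $t=us\in S$ and $m'=tm\in M$, this says $a_{1}a_{2}a_{3}m'\in N$.

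The central step is to show that $a_{1}a_{2}a_{3}m'\neq 0$, because only then does the weakly classical 1-absorbing property of $N$ apply and yield $a_{1}a_{2}m'\in N$ or $a_{3}m'\in N$. Were $a_{1}a_{2}a_{3}m'=0$ in $M$, then $\tfrac{a_{1}a_{2}a_{3}m}{s_{1}s_{2}s_{3}s_{4}}=\tfrac{t\,a_{1}a_{2}a_{3}m}{t\,s_{1}s_{2}s_{3}s_{4}}$ would be $0$ in $S^{-1}M$, contradicting the standing assumption. Once the dichotomy in $N$ is obtained, translating back through the localization is immediate: in the first case $\tfrac{a_{1}}{s_{1}}\cdot\tfrac{a_{2}}{s_{2}}\cdot\tfrac{m}{s_{4}}=\tfrac{a_{1}a_{2}m'}{t\,s_{1}s_{2}s_{4}}\in S^{-1}N$, and in the second case $\tfrac{a_{3}}{s_{3}}\cdot\tfrac{m}{s_{4}}=\tfrac{a_{3}m'}{t\,s_{3}s_{4}}\in S^{-1}N$, which is what is required.

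The only mildly delicate points are (i) the bookkeeping that moves the nonunit condition from $S^{-1}R$ back to $R$, and (ii) the passage from the nonvanishing of the fraction in $S^{-1}M$ to the nonvanishing of the integer expression $a_{1}a_{2}a_{3}m'$ in $M$; everything else is routine manipulation of fractions.
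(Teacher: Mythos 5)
Your proof is correct and follows essentially the same route as the paper's: clear denominators to obtain $0\neq a_{1}a_{2}a_{3}(tm)\in N$, apply the weakly classical 1-absorbing prime hypothesis on $N$, and push the resulting dichotomy back into $S^{-1}N$. You in fact supply two details the paper leaves implicit --- that a nonunit $\frac{a_i}{s_i}$ of $S^{-1}R$ forces $a_i$ to be a nonunit of $R$, and that the cleared-denominator element cannot vanish in $M$ without the original fraction vanishing in $S^{-1}M$ --- so nothing further is needed.
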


\begin{proof}
Let $N$ be a weakly classical 1-absorbing prime submodule of $M$ and $\left(
N:_{R}M\right)  \cap S=\varnothing$. Suppose that $0\neq\frac{a}{s}\frac{b}%
{t}\frac{c}{u}\frac{m}{v}\in S^{-1}N$ for some nonunits $\frac{a}{s},\frac
{b}{t},\frac{c}{u}\in S^{-1}R\ $and $\frac{m}{v}\in S^{-1}M$. Then there
exists $w\in S$ such that $0\neq wabcm\in N$. Since $N$ is a weakly classical
1-absorbing prime, then we have $ab(wm)\in N$ or $c(wm)\in N$. Thus $\frac
{a}{s}\frac{b}{t}\frac{m}{v}=\frac{wabm}{wstv}\in S^{-1}N$ or $\frac{c}%
{u}\frac{m}{v}=\frac{wcm}{wuv}\in S^{-1}N$. Therefore, $S^{-1}N$ is a
classical 1-absorbing prime submodule of $S^{-1}M$.
\end{proof}

\begin{proposition}
\label{pro1} Let $N$ be a proper submodule of $M.$

(1)\ If $N$ is a weakly 1-absorbing prime submodule of $M$, then $N$ is a
weakly classical 1-absorbing prime submodule of $M$.

(2) If $N$ is a weakly classical prime submodule of $M,\ $then $N\ $is a
weakly classical 1-absorbing prime submodule and weakly semiprime submodule of
$M.\ $

(3) Suppose that $M$ is a reduced module. Then $N\ $is a weakly classical
prime submodule of $M$ if and only if $N\ $is a weakly semiprime submodule and
weakly classical 1-absorbing prime submodule of $M.$
\end{proposition}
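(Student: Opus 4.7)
The proof splits into three pieces of increasing delicacy; the first two are direct regroupings of the hypothesis, while the third requires combining the reduced hypothesis with the weakly semiprime one.

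For part (1), the plan is to read $0\neq abcm\in N$ as $0\neq (ab)(cm)\in N$ with $a,b$ nonunits in $R$ and $cm\in M$; the weakly 1-absorbing prime hypothesis on $N$ then forces $ab\in(N:_{R}M)$ (which gives $abm\in N$) or $cm\in N$, which is exactly the weakly classical 1-absorbing prime conclusion.

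For part (2), I would apply the weakly classical prime property twice. Regrouping $0\neq abcm\in N$ as $0\neq (ab)c\cdot m\in N$ (viewing $ab$ and $c$ as the two ring coefficients) yields $abm\in N$ or $cm\in N$, so $N$ is weakly classical 1-absorbing prime. Taking $b=a$ in the weakly classical prime definition immediately gives the weakly semiprime property.

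Part (3) is the heart of the proposition and the main obstacle. The forward direction is (2); for the converse assume $M$ is reduced and $N$ is both weakly semiprime and weakly classical 1-absorbing prime, and take $0\neq abm\in N$. If $a$ or $b$ is a unit the conclusion is immediate by multiplying by its inverse, so suppose $a,b$ are nonunits. The plan is to feed $a\cdot a\cdot b\cdot m = a^{2}bm\in N$ into the weakly classical 1-absorbing prime hypothesis. The delicate point is the nonvanishing: if $a^{2}bm=0$, then reducedness applied to the module element $bm$ gives $a(bm)=abm=0$, contradicting $abm\neq 0$. Hence $0\neq a^{2}bm\in N$, and the weakly classical 1-absorbing prime property yields $a^{2}m\in N$ or $bm\in N$. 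In the second case we are done. In the first case, split once more: if $a^{2}m=0$, reducedness gives $am=0\in N$; if $a^{2}m\neq 0$, then $0\neq a^{2}m\in N$ together with the weakly semiprime hypothesis yields $am\in N$. Either way we obtain $am\in N$ or $bm\in N$, showing that $N$ is weakly classical prime. The only real subtlety throughout is the handling of zero divisors, which the reduced hypothesis is precisely designed to control.
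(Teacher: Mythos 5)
Your proof is correct and takes essentially the same route as the paper: parts (1) and (2) are the direct regroupings the paper dismisses as ``clear,'' and in part (3) you likewise feed $a^{2}bm$ into the weakly classical 1-absorbing prime hypothesis after using reducedness to ensure $a^{2}bm\neq 0$. You are in fact slightly more careful than the paper in the subcase $a^{2}m=0$, where the weakly semiprime hypothesis does not apply directly and reducedness must be invoked a second time to get $am=0\in N$.
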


\begin{proof}
$(1),(2):\ $Clear.

$(3)\ \left(  \Rightarrow\right)  :$ follows from (2). $\left(  \Leftarrow
\right)  $: suppose that $N$ is a weakly classical 1-absorbing prime submodule
and weakly semiprime submodule of a reduced module $M$. Choose $a,b\in R$ and
$m\in M$ such that $0\neq abm\in N$.\ Without loss of generality, we may
assume that $a,b$ are nonunits$.$ Since $M\ $is reduced, we have $0\neq
a^{2}bm\in N.\ $As $N\ $is a weakly classical 1-absorbing prime submodule, we
get either $a^{2}m\in N$ or $bm\in N.\ $If $a^{2}m\in N,\ $then $am\in
N\ $because $N\ $is weakly semiprime. Thus, $N\ $is weakly classical prime
submodule of $M.$
\end{proof}

\begin{proposition}
\label{pcyc}Let $M$ be a cyclic $R$-module. Then a proper submodule $N$ of $M$
is a weakly 1-absorbing prime submodule if and only if it is weakly classical
1-absorbing prime submodule of $M$.
\end{proposition}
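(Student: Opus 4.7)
The plan is to handle the two directions asymmetrically. The forward direction is almost immediate from Proposition \ref{pro1}(1), which already tells us that every weakly $1$-absorbing prime submodule is weakly classical $1$-absorbing prime in any module (cyclic or not). So the real work is in the reverse implication, and this is where cyclicity enters.

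Assume $M=Rm_{0}$ is cyclic and $N$ is weakly classical $1$-absorbing prime. A useful preliminary observation is that for a cyclic module $(N:_{R}M)=(N:_{R}m_{0})$, since $sM\subseteq N$ is equivalent to $sm_{0}\in N$. Now take $0\neq abm\in N$ with $a,b\in R$ nonunits and $m\in M$; I want to conclude $ab\in(N:_{R}M)$ or $m\in N$. Write $m=rm_{0}$ for some $r\in R$. If $m\in N$ we are done, so assume $m\notin N$; note that $r\neq 0$, since $r=0$ would force $abm=0$.

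The case split is on the nature of $r$. If $r$ is a unit, then from $abm\in N$ I multiply by $r^{-1}$ (using that $N$ is a submodule) to obtain $abm_{0}=r^{-1}(abm)\in N$, which by the preliminary observation gives $ab\in(N:_{R}M)$. If instead $r$ is a nonunit, then $a,b,r$ are three nonunits, and $0\neq abrm_{0}=abm\in N$. Applying the weakly classical $1$-absorbing prime hypothesis to the triple $(a,b,r)$ and element $m_{0}$ yields $abm_{0}\in N$ or $rm_{0}\in N$. The second option means $m=rm_{0}\in N$, contradicting our assumption $m\notin N$; so $abm_{0}\in N$ and hence $ab\in (N:_{R}M)$.

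There is no single hard step here; the only subtlety is making sure the case analysis is exhaustive and that the nonzero hypothesis is preserved when applying the three-nonunit definition. The point where something could go wrong is the nonunit subcase: one must verify $abrm_{0}\neq 0$, which is handed to us for free because $abrm_{0}=abm$ and $abm\neq 0$ by hypothesis. Everything else is bookkeeping, and the proof needs only the identity $(N:_{R}M)=(N:_{R}m_{0})$ together with the two-line case split above.
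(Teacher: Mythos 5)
Your proof is correct and follows essentially the same route as the paper: the forward direction is Proposition \ref{pro1}(1), and for the converse you write $m=rm_{0}$ and feed the triple $(a,b,r)$ of nonunits into the weakly classical $1$-absorbing prime condition when $r$ is a nonunit. The only cosmetic difference is that you split on whether the coefficient $r$ is a unit, whereas the paper splits on whether $Rm=M$; both case analyses are exhaustive and lead to the same conclusion.
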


\begin{proof}
$\left(  \Rightarrow\right)  \ $Follows from Proposition \ref{pro1}
(1).\ $\left(  \Leftarrow\right)  $ Let $M=Rm$ for some $m\in M$ and $N$ be a
weakly classical 1-absorbing prime submodule of $M$. Suppose that $0\neq
abx\in N$ for some nonunits $a,b\in R$ and $x\in M$. If $Rx=Rm=M$, then we
conclude that $ab\in(N:_{R}M).\ $So assume that $Rx\neq M.\ $Then there exists
nonunit $c\in R$ such that $x=cm$. Therefore $0\neq abx=abcm\in$ $N$.
Since$\ N$ is a weakly classical 1-absorbing prime submodule of $M,$ we have
either $abm\in N$ or $cm\in N$. Hence $ab\in(N:_{R}M)$ or $x=cm\in
N$.\ Consequently, $N$ is a classical 1-absorbing prime submodule of $M$.
\end{proof}

\begin{definition}
Let $N$ be a proper submodule of $M$ and $a,b,c$ be nonunits in $R$, $m\in M$.
If $N$ is a weakly classical 1-absorbing prime submodule and $abcm=0$,
$abm\notin N$, $cm\notin N$, then $\left(  a,b,c,m\right)  $ is called a
classical 1-quadruple-zero of $N$.
\end{definition}

\begin{theorem}
\label{theo5} Let $N$ be a weakly classical 1-absorbing prime submodule of
$M,$ and suppose that $abcK\subseteq N$ for some nonunits $a,b,c\in R$ and
some submodule $K$ of $M$. If $\left(  a,b,c,k\right)  $ is not a classical
1-quadruple-zero of $N$ for every $k\in K$, then $abK\subseteq N$ or
$cK\subseteq N$.
\end{theorem}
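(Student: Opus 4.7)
The plan is to argue pointwise first, then use a union-of-submodules trick. Fix an arbitrary $k\in K$. Since $abcK\subseteq N$, we have $abck\in N$. Split into two cases. If $abck\neq 0$, then because $N$ is weakly classical $1$-absorbing prime and $a,b,c$ are nonunits, we get $abk\in N$ or $ck\in N$. If $abck=0$, then by the hypothesis that $(a,b,c,k)$ is not a classical $1$-quadruple-zero of $N$, one of the conditions $abk\in N$ or $ck\in N$ must fail to fail, i.e.\ again $abk\in N$ or $ck\in N$. So for every $k\in K$,
\[
abk\in N \quad\text{or}\quad ck\in N.
\]

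Next I would recast this as $K=K_{1}\cup K_{2}$ where
\[
K_{1}=K\cap(N:_{M}ab),\qquad K_{2}=K\cap(N:_{M}c).
\]
Both $K_{1}$ and $K_{2}$ are submodules of $K$. If $K=K_{1}$ then $abK\subseteq N$ and we are done; if $K=K_{2}$ then $cK\subseteq N$ and we are done. So assume toward contradiction that neither equality holds; then there exist $k_{1}\in K_{1}\setminus K_{2}$ and $k_{2}\in K_{2}\setminus K_{1}$, meaning $abk_{1}\in N$, $ck_{1}\notin N$, $abk_{2}\notin N$, and $ck_{2}\in N$.

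Now consider $k_{1}+k_{2}\in K$. If $ab(k_{1}+k_{2})\in N$, then subtracting $abk_{1}\in N$ would give $abk_{2}\in N$, a contradiction; so $ab(k_{1}+k_{2})\notin N$. Symmetrically, $c(k_{1}+k_{2})\notin N$. But this contradicts the fact, established in the first step applied to $k=k_{1}+k_{2}\in K$, that $ab(k_{1}+k_{2})\in N$ or $c(k_{1}+k_{2})\in N$. Hence $K=K_{1}$ or $K=K_{2}$, yielding $abK\subseteq N$ or $cK\subseteq N$, as desired.

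The only delicate point is the first step, where one has to use the quadruple-zero hypothesis precisely to cover the case $abck=0$; the rest is the standard observation that a module cannot be the union of two proper submodules, applied to the submodule $K$. I do not expect any real obstacle beyond keeping the two cases (zero versus nonzero product) cleanly separated.
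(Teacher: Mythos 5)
Your proof is correct and is essentially the paper's argument: both rest on the pointwise dichotomy ($abk\in N$ or $ck\in N$ for each $k\in K$, obtained from the weakly classical 1-absorbing property when $abck\neq 0$ and from the no-quadruple-zero hypothesis when $abck=0$), followed by the $k_{1}+k_{2}$ trick. Your packaging of the final step as ``$K$ is not the union of the two proper submodules $K\cap(N:_{M}ab)$ and $K\cap(N:_{M}c)$'' is just a cleaner way of stating the same element-chasing contradiction the paper carries out.
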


\begin{proof}
Suppose that $\left(  a,b,c,k\right)  $ is not a classical 1-quadruple-zero of
$N$ for every $k\in K$ and $abcK\subseteq N$. Assume on the contrary that
$abK\nsubseteq N$ and $cK\nsubseteq N$. Then there are $k_{1},k_{2}\in K$ such
that $abk_{1}\notin N$ and $ck_{2}\notin N$. If $abck_{1}\neq0$, then we have
$ck_{1}\in N$, because $abk_{1}\notin N$ and $N\ $is a weakly classical
1-absorbing prime. So assume that $abck_{1}=0.\ $Since $\left(  a,b,c,k_{1}%
\right)  $ is not a classical 1-quadruple-zero of $N$, we conclude that
$ck_{1}\in N.\ $In both cases, we have $ck_{1}\in N.\ $Likewise, $abk_{2}\in
N.$ On the other hand, note that $abc(k_{1}+k_{2})\in N.\ $A similar argument
above shows that $ab(k_{1}+k_{2})\in N$ or$\ c(k_{1}+k_{2})\in N.\ $As
$abk_{2}\in N$ and $ck_{1}\in N,\ $we have $abk_{1}\in N$ or $ck_{2}\in
N\ $which both of them are contradictions. Hence, we obtain $abK\subseteq N$
or $cK\subseteq N$.
\end{proof}

\begin{definition}
\bigskip Let $N$ be a weakly classical 1-absorbing prime submodule of $M,$ and
suppose that $IJLK\subseteq N$ for some proper ideals $I,J,L$ of $R$ and some
submodule $K$ of $M$. We say that $N$ is a free classical 1-quadruple-zero
with respect to $IJLK$ if $\left(  a,b,c,k\right)  $ is not a classical
1-quadruple-zero of $N$ for every elements $a\in I$, $b\in J$, $c\in L$ and
$k\in K$.
\end{definition}

\begin{remark}
\bigskip Let $N$ be a weakly classical 1-absorbing prime submodule of $M$ and
suppose that $IJLK\subseteq N$ for some proper ideals $I,J,L$ of $R$ and some
submodule $K$ of $M$ such that $N$ is a free classical 1-quadruple-zero with
respect to $IJLK$. Hence, if $a\in I$, $b\in J$, $c\in L$ and $k\in K,$\ then
$abk\in N$ or $ck\in N$.
\end{remark}

\begin{corollary}
Let $N$ be a weakly classical 1-absorbing prime submodule of $M,$ and suppose
that $IJLK\subseteq N$ for some proper ideals $I,J,L$ of $R$ and some
submodule $K$ of $M$. If $N$ is a free classical 1-quadruple-zero with respect
to $IJLK$, then $IJK\subseteq N$ or $LK\subseteq N$.
\end{corollary}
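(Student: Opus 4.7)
The plan is to reduce this ideal-theoretic statement to the element-level Theorem \ref{theo5}. First I would record the elementary observation that every element of a proper ideal of $R$ is a nonunit: indeed, if $x\in I$ were a unit, then $1=x\cdot x^{-1}\in I$ would force $I=R$. Consequently, for any choice $a\in I$, $b\in J$, $c\in L$ the elements $a,b,c$ are all nonunits, so the hypotheses of Theorem \ref{theo5} are in reach.

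Next I would fix an arbitrary triple $(a,b,c)\in I\times J\times L$. From $IJLK\subseteq N$ we get $abcK\subseteq N$, and the free classical $1$-quadruple-zero hypothesis tells us that $(a,b,c,k)$ is not a classical $1$-quadruple-zero of $N$ for any $k\in K$. Theorem \ref{theo5} therefore delivers the dichotomy
\[
abK\subseteq N \quad\text{or}\quad cK\subseteq N.
\]

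Finally, I would close by a contrapositive argument. Assume $LK\nsubseteq N$; then there exist $c_{0}\in L$ and $k_{0}\in K$ with $c_{0}k_{0}\notin N$, so $c_{0}K\nsubseteq N$. Applying the dichotomy above with $c=c_{0}$ to every $(a,b)\in I\times J$, the second alternative is excluded, forcing $abK\subseteq N$ for all $a\in I$, $b\in J$. Since each element of $IJK$ is a finite sum $\sum_i a_i b_i k_i$ with $a_i\in I$, $b_i\in J$, $k_i\in K$, and every such summand lies in $N$, we conclude $IJK\subseteq N$. This yields the required disjunction $IJK\subseteq N$ or $LK\subseteq N$.

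The argument is essentially a bookkeeping exercise on top of Theorem \ref{theo5}, so I do not anticipate a serious obstacle; the only mildly delicate point is recognizing that the properness of $I,J,L$ automatically supplies the nonunit hypothesis needed to invoke that theorem.
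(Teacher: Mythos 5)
Your proof is correct and follows essentially the same route as the paper's: both reduce the statement to Theorem \ref{theo5} applied to individual triples $(a,b,c)\in I\times J\times L$, the only difference being that you argue by contraposition while the paper argues by contradiction. Your version is in fact slightly more careful, since you explicitly justify that elements of proper ideals are nonunits and that $IJK\subseteq N$ follows from $abK\subseteq N$ for all $a\in I$, $b\in J$ via the finite-sum description of $IJK$.
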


\begin{proof}
Suppose that $N$ is a free classical 1-quadruple-zero with respect to $IJLK$.
Assume that $IJK\nsubseteq N$ and $LK\nsubseteq N$. Then there are $a\in I$,
$b\in J$, $c\in L$ with $abK\nsubseteq N$ and $cK\nsubseteq N$. Since
$abcK\subseteq N$ and $N$ is a free classical 1-quadruple-zero with respect to
$IJLK$, then Theorem \ref{theo5} implies that $abK\subseteq N$ and
$cK\subseteq N$, which is a contradiction. Consequently, we have $IJK\subseteq
N$ or $LK\subseteq N$.
\end{proof}

Let $M$ be an $R$-module and $N$ be a submodule $M$. For every $a\in R$,
$\left\{  m\in M:am\in N\right\}  $ is denoted by $\left(  N:_{M}a\right)  $.
It is easy to see that $\left(  N:_{M}a\right)  $ is a submodule of $M$
containing $N$. In the next theorem, we give a long list whose each item
characterizes weakly classical 1-absorbing prime submodules.

\begin{theorem}
\label{tmain1}Let $M$ be an $R$-module and $N$ be a proper submodule $M$. The
following conditions are equivalent.

(1) $N$ is a weakly classical 1-absorbing prime submodule;

(2) For every nonunits $a,b,c\in R$, $\left(  N:_{M}abc\right)  =\left(
0:_{M}abc\right)  \cup\left(  N:_{M}ab\right)  \cup\left(  N:_{M}c\right)  $;

(3)\ For every nonunits $a,b,c\in R$ and $m\in M$ with $abm\notin N$; $\left(
N:_{R}abm\right)  =\left(  0:_{R}abm\right)  \cup\left(  N:_{R}m\right)  $;

(4)\ For every nonunits $a,b,c\in R$ and $m\in M$ with $abm\notin N$; $\left(
N:_{R}abm\right)  =\left(  0:_{R}abm\right)  $ or $\left(  N:_{R}abm\right)
=\left(  N:_{R}m\right)  $;

(5) For every nonunits $a,b\in R$, $m\in M$ and every proper ideal $I$ of $R$
with $0\neq abIm\subseteq N,$ either $abm\in N$ or $Im\subseteq N$;

(6) For every nonunits $a,b\in R$, $m\in M$ and every proper ideal $I$ of $R$
with $0\neq abIm\subseteq N,$ either $aIm\subseteq N$ or $bm\in N$;

(7) For every proper ideal $I$ of $R,$ nonunit $a\in R$ and $m\in M$ with
$a$I$m\nsubseteq N$, $\left(  N:_{R}aIm\right)  =\left(  0:_{R}aIm\right)
\ $or $\left(  N:_{R}aIm\right)  =\left(  N:_{R}m\right)  $;

(8) For every proper ideals $I,J,K$ of $R$ and $m\in M$ with $0\neq
IJKm\subseteq N$, either $IJm\subseteq N$ or $Km\subseteq N$.
\end{theorem}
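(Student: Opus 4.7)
The plan is to establish a web of implications rather than a single cycle, since the eight conditions partition naturally: (1)--(2) are phrased in terms of module-colons, (3)--(4) in terms of ring-colons, (5)--(6) replace one scalar by an ideal, (7) reformulates this as an ideal equality, and (8) replaces all three scalars by ideals.

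I would first prove (1) $\Leftrightarrow$ (2): for the forward direction, split an element $m\in (N:_M abc)$ according to whether $abcm=0$, applying (1) in the nonzero case, and observe that the reverse inclusions $(0:_M abc), (N:_M ab), (N:_M c)\subseteq (N:_M abc)$ are immediate. The converse reads off (1) from (2) applied to a triple $(a,b,c,m)$ with $0\neq abcm\in N$. Next, I would handle (1) $\Leftrightarrow$ (3) $\Leftrightarrow$ (4) in parallel fashion using ring-colons. For (1) $\Rightarrow$ (3), if $r\in (N:_R abm)$ and $rabm\neq 0$, then $r$ cannot be a unit (a unit would force $abm\in N$), so (1) applied to the nonunits $a,b,r$ yields $r\in (N:_R m)$. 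The equivalence (3) $\Leftrightarrow$ (4) rests on the standard fact that an ideal which is a union of two subideals must coincide with one of them. The implication (4) $\Rightarrow$ (1) is the usual extraction argument, selecting the appropriate alternative from whichever equality in (4) holds.

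For the ideal-valued characterizations (5) and (6), the implications from (1) rely on a linear perturbation: given $0\neq abIm\subseteq N$ with $abm\notin N$, choose $i_0\in I$ with $abi_0m\neq 0$ and deduce $i_0m\in N$ from (1). For an arbitrary $i\in I$, if $abim\neq 0$ we apply (1) directly, while if $abim=0$ we replace $i$ by $i_0+i$, whose product with $abm$ is again nonzero, and then subtract $i_0m$ to recover $im\in N$; this shows $Im\subseteq N$. The implications (5) $\Rightarrow$ (1) and (6) $\Rightarrow$ (1) follow by specializing $I=Rc$ and $I=Rb$ respectively, since a principal ideal generated by a nonunit is proper. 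Condition (7) then follows from (6) by the same union-of-two-ideals trick, while (7) $\Rightarrow$ (1) is another specialization (take $I=Rb$, and note that $c\in(N:_R aIm)\setminus(0:_R aIm)$).

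The equivalence (1) $\Leftrightarrow$ (8) is the main obstacle. The backward direction is routine by taking $I=Ra$, $J=Rb$, $K=Rc$ and using that $Rabcm\subseteq N$ with $abcm\neq 0$. For the forward direction, the Corollary immediately preceding this theorem (built on Theorem \ref{theo5}) handles the case where $N$ admits no classical $1$-quadruple-zero in $IJK\cdot Rm$, delivering $IJm\subseteq N$ or $Km\subseteq N$. The delicate point is the remaining case in which some $(a,b,c,rm)$ with $a\in I$, $b\in J$, $c\in K$, $r\in R$ is a $1$-quadruple-zero of $N$. Because $IJKm\neq 0$, one can select a triple $(a',b',c')$ with $a'b'c'm\neq 0$ and combine it linearly with the bad quadruple so that the relevant products with $m$ become nonzero members of $N$; repeated appeal to (1) then propagates either $IJm\subseteq N$ or $Km\subseteq N$. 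The bookkeeping for this last perturbation is the technical heart of the proof.
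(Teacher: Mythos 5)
Your treatment of (1)--(7) is sound: the web of implications you set up ((1) $\Leftrightarrow$ (2), (1) $\Leftrightarrow$ (3) $\Leftrightarrow$ (4), the perturbation arguments for (5) and (6), the union-of-two-ideals step for (7), and the specializations back to (1)) all check out and are essentially the same ideas the paper strings into a single cycle. The genuine problem is your plan for (1) $\Rightarrow$ (8). You propose to invoke the Corollary after Theorem \ref{theo5} when $N$ has no classical $1$-quadruple-zero in $I\times J\times K\times Rm$, and in the remaining case to combine a quadruple-zero linearly with a triple $(a',b',c')$ satisfying $a'b'c'm\neq 0$ so that repeated appeals to (1) ``propagate either $IJm\subseteq N$ or $Km\subseteq N$.'' That last step cannot succeed as described: if $(a_{0},b_{0},c_{0},r_{0}m)$ is a classical $1$-quadruple-zero with $a_{0}\in I$, $b_{0}\in J$, $c_{0}\in K$, then $a_{0}b_{0}r_{0}m\notin N$ forces $a_{0}b_{0}m\notin N$ (since $N$ is a submodule), and likewise $c_{0}m\notin N$; hence $IJm\nsubseteq N$ \emph{and} $Km\nsubseteq N$, so both conclusions of (8) are already ruled out. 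The only way (8) can hold in this case is for the case to be vacuous, i.e.\ one must show that such a quadruple-zero forces $IJKm=0$, contradicting the hypothesis. You neither identify this as the goal nor indicate how to reach it, and the required argument (killing all the mixed products $a_{0}b_{0}c'm$, $a_{0}b'c_{0}m$, $a'b_{0}c_{0}m$ and then the doubly and triply perturbed ones) is precisely the multi-stage bookkeeping the paper avoids by never attacking (1) $\Rightarrow$ (8) head-on.

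The repair is already available to you: you have established (7), so derive (8) from (7) by perturbing in one ideal only, as the paper does. Given $0\neq IJKm\subseteq N$ with $IJm\nsubseteq N$, choose $a\in J$ with $aIm\nsubseteq N$; since $K\subseteq\left(N:_{R}aIm\right)$, condition (7) gives $Km\subseteq N$ or $aIKm=0$. In the latter case choose $b\in J$ with $bIKm\neq0$; applying (7) to $bIm$ (if $bIm\nsubseteq N$) or else to $(a+b)Im$ (which satisfies $(a+b)Im\nsubseteq N$ and $(a+b)IKm=bIKm\neq0$) yields $Km\subseteq N$. With this substitution the argument closes, and the remainder of your proposal stands.
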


\begin{proof}
$(1)\Rightarrow(2):$ Suppose that $N$ is a weakly classical 1-absorbing prime
submodule of $M$. Let $m\in$ $\left(  N:_{M}abc\right)  $. Then $abcm\in N$.
If $abcm=0$, then $m\in\left(  0:_{M}abc\right)  $. Assume that $abcm\neq
0.$Since $N$ is a weakly classical 1-absorbing prime submodule, we have
$abm\in N$ or $cm\in N$. Hence $m\in\left(  N:_{M}ab\right)  $ or $m\in\left(
N:_{M}c\right)  $ and so $m\in\left(  0:_{M}abc\right)  \cup\left(
N:_{M}ab\right)  \cup\left(  N:_{M}c\right)  $. Consequently, $\left(
N:_{M}abc\right)  =\left(  0:_{M}abc\right)  \cup\left(  N:_{M}ab\right)
\cup\left(  N:_{M}c\right)  $.

$(2)\Rightarrow(3):\ $Let $abm\notin N$ for some nonunits $a,b\in R$ and $m\in
M$. Assume that $c\in\left(  N:_{R}abm\right)  $. Then $abcm\in N$, and so
$m\in\left(  N:_{M}abc\right)  $.Thus by part (2), $m\in\left(  0:_{M}%
abc\right)  \cup\left(  N:_{M}ab\right)  \cup\left(  N:_{M}c\right)  $. Since
$abm\notin N$, then $m\notin\left(  N:_{M}ab\right)  $ which implies that
$m\in\left(  0:_{M}abc\right)  $ or $m\in\left(  N:_{M}c\right)  $. Therefore
$c\in\left(  0:_{R}abm\right)  $ or $c\in\left(  N:_{R}m\right)  $.
Consequently, $c\in\left(  0:_{R}abm\right)  \cup\left(  N:_{R}m\right)  $.
Thus $\left(  N:_{R}abm\right)  =\left(  0:_{R}abm\right)  \cup\left(
N:_{R}m\right)  $.

$(3)\Longrightarrow(4):\ $By the fact that if an ideal (a subgroup) is the
union of two ideals (two subgroups), then it is equal to one of them.

$(4)\Longrightarrow(5):\ $Assume that $0\neq abIm\subseteq N\ $for some
nonunits $a,b\in R$, $m\in M$ and a proper ideal $I$ of $R$. Hence
$I\subseteq(N:_{R}abm)$ and $I\nsubseteq(0:_{R}abm)$. If $abm\in N$, then we
are done. So, assume that $abm\notin N$. Therefore by part (4), we have that
$I\subseteq\left(  N:_{R}m\right)  $ which implies that $Im\subseteq N$.

$(5)\Longrightarrow(6):\ $Let $0\neq abIm\subseteq N\ $for some proper ideal
$I$ of $R,$ nonunits $a,b\in R\ $and $m\in M$. Assume that $aIm\nsubseteq
N.\ $Then there exists $x\in I$ such that $axm\notin N.\ $Then note that
$ax(Rb)m\subseteq N.\ $If $0\neq ax(Rb)m,$ then by part (5) we have
$(Rb)m\subseteq N$ which completes the proof. So asusme that $ax(Rb)m=0.\ $%
Since $0\neq abIm,\ $there exists $y\in I$ such that $ay(Rb)m\neq0.\ $This
implies that $0\neq a(x+y)(Rb)m\subseteq N$.\ As $0\neq ay(Rb)m\subseteq
N,\ $again by part (5), we have $aym\in N$ or $(Rb)m\subseteq N.\ $\ Let
$aym\in N.\ $Then we have $a(x+y)m\notin N.\ $As $0\neq a(x+y)(Rb)m\subseteq
N,\ $by part (5),\ we get $(Rb)m\subseteq N$ which implies that $bm\in N.$ In
the other case, we have $bm\in N$ which completes the proof.

$(6)\Rightarrow(7):\ $Let $a$I$m\nsubseteq N$ for some nonunit $a\in R,\ $some
proper ideal $I$ of $R$ and $m\in M.\ $Choose $b\in(N:_{R}a$I$m).\ $Then we
have $ab$I$m\subseteq N.\ $If $ab$I$m=0,\ $then \ we get $b\in(0:_{R}a$I$m).$
Now, assume that $ab$I$m\neq0.\ $Then by part (6), we get $bm\in N$ which
implies that $b\in(N:_{R}m).\ $Thus, we conclude that $\left(  N:_{R}%
aIm\right)  =\left(  0:_{R}aIm\right)  \cup(N:_{R}m).\ $This implies that
$\left(  N:_{R}aIm\right)  =\left(  0:_{R}aIm\right)  \ $or $\left(
N:_{R}aIm\right)  =\left(  N:_{R}m\right)  $.

$(7)\Longrightarrow(8):\ $Let $0\neq IJKm\subseteq N$ for some proper ideals
$I,J,K\ $of $R$ and $m\in M.\ $Assume that $IJm\nsubseteq N.\ $Then there
exists $a\in J\ $such that $a$I$m\nsubseteq N.\ $Since $aIKm\subseteq N,\ $we
have $K\subseteq\left(  N:_{R}aIm\right)  =\left(  0:_{R}aIm\right)  $ or
$K\subseteq\left(  N:_{R}aIm\right)  =\left(  N:_{R}m\right)  $ by part
(7).\ This gives $aIKm=0$ or $Km\subseteq N.\ $In the second case, we are
done. So we may assume that $aIKm=0.\ $Since $0\neq IJKm,\ $there exists $b\in
J$ such that $IbKm\neq0.\ $Since $K\subseteq\left(  N:_{R}bIm\right)
,\ $again by part (7), we have $bIm\subseteq N$ or $Km\subseteq N.\ $So assume
that $bIm\subseteq N$.\ Then $(a+b)Im\nsubseteq N$ and $0\neq(a+b)IKm\subseteq
N.\ $Since $K\subseteq(N:_{R}(a+b)Im),\ $by part (7), we conclude that
$Km\subseteq N.$

$(8)\Rightarrow(1):\ $Clear.
\end{proof}

\begin{theorem}
\label{theo7} Let $N$ be a weakly classical 1-absorbing prime submodule of $M$
and suppose that $\left(  a,b,c,m\right)  $ is a classical 1-quadruple-zero of
$N$ for some nonunits $a,b,c\in R$ and $m\in M$. Then

(1) $abcN=0=ab(N:_{R}M)m.$

(2) If $a,b\notin(N:_{R}cm),\ $then $ac(N:_{R}M)m=bc(N:_{R}M)m=a(N:_{R}%
M)^{2}m=b(N:_{R}M)^{2}m=c(N:_{R}M)^{2}m=0.\ $In particular, $(N:_{R}%
M)^{3}m=0.\ $
\end{theorem}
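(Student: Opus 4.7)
The plan is to establish part (1) in two stages---first $abcN = 0$, then the mixed identity $ab(N:_{R}M)m = 0$---and then to derive part (2) by exploiting the symmetry of the 1-quadruple-zero condition and iterating the shifting trick from part (1).

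For $abcN = 0$, I would fix $n \in N$ and examine $abc(m+n) = abcn$: this element lies in $N$, and if it were nonzero then weakly classical 1-absorbing primeness applied to $(a,b,c,m+n)$ would force $ab(m+n) \in N$ or $c(m+n) \in N$, each of which after subtracting $abn$ or $cn$ (both in $N$) contradicts the 1-quadruple-zero assumption. For $ab(N:_{R}M)m = 0$, I would fix $r \in (N:_{R}M)$ and use the identity $ab(c+r)m = abrm$: this lies in $N$ because $rm \in N$, and when $c + r$ is a nonunit the definition applied to $(a,b,c+r,m)$ produces $abm \in N$ or $(c+r)m \in N$, both of which contradict the hypothesis.

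The principal obstacle is the case where $c + r$ is a unit $u$, since the definition of a weakly classical 1-absorbing prime submodule is not directly applicable there. The remedy is the observation $u \cdot abm = ab(c+r)m = abrm \in N$, whence $abm = u^{-1}(abrm) \in N$ because $N$ is an $R$-submodule and $u^{-1} \in R$---again contradicting $abm \notin N$. This "divide-through-by-a-unit" maneuver will be reused every time I shift a nonunit by an element of $(N:_{R}M)$ and the sum happens to be a unit.

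For part (2), I would first strengthen the hypothesis: $a, b \notin (N:_{R}cm)$ together with $abm \notin N$ forces $am, bm, acm, bcm \notin N$, so that $(a,c,b,m)$ and $(b,c,a,m)$ are also classical 1-quadruple-zeros of $N$. Applying part (1) to these two quadruples immediately yields $ac(N:_{R}M)m = bc(N:_{R}M)m = 0$. The three identities $x(N:_{R}M)^{2}m = 0$ would then follow by shifting two nonunits at once: expanding $a(b+r)(c+s)m$ for $r,s \in (N:_{R}M)$, the cancellations just established reduce it to $arsm$, and the same nonunit-or-unit dichotomy forces it to vanish (the unit case now producing contradictions with $am, acm$ or $abm \notin N$); symmetric expansions of $b(a+r)(c+s)m$ and $c(a+r)(b+s)m$ take care of the other two. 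Finally, expanding the triple shift $(a+r)(b+s)(c+t)m$ and invoking all seven previously proven vanishings collapses the sum to $rstm$, and one last application of the dichotomy delivers $(N:_{R}M)^{3}m = 0$.
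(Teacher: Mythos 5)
Your proposal is correct and follows essentially the same strategy as the paper's proof: shift the nonunits by elements of $(N:_{R}M)$, use the previously established vanishings to collapse the expansion, and resolve the unit/nonunit dichotomy (dividing through by the unit in the degenerate case). The one small streamlining is that you obtain $ac(N:_{R}M)m=bc(N:_{R}M)m=0$ by observing that $(a,c,b,m)$ and $(b,c,a,m)$ are themselves classical 1-quadruple-zeros and citing part (1), where the paper instead repeats the shifting computation directly; both routes are valid.
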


\begin{proof}
$(1):\ $Let $abcN\neq0.\ $Then there exists $n\in N\ $with $abcn\neq0.\ $This
gives $0\neq abcn=abc(m+n)\in N.\ $Since $N$ is weakly classical 1-absorbing
prime, we have $ab(m+n)\in N$ or $c(m+n)\in N.\ $Which implies that $abm\in N$
or $cm\in N,\ $which is a contradiction. Thus, $abcN=0.$\ Now, we will show
that $ab(N:_{R}M)m=0.\ $Choose $x\in(N:_{R}M)$ and assume that $abxm\neq
0.\ $Then we have $0\neq abxm=ab(c+x)m\in N.\ $If $c+x\ $is unit, we conclude
that $abm\in N\ $which is a contradiction. Thus $c+x$ is nonunit. As $N\ $is a
weakly classical 1-absorbing prime, we have $abm\in N\ $or $(c+x)m\in
N.\ $Then we obtain $abm\in N\ $or $cm\in N,\ $again a contradiction. Thus,
$ab(N:_{R}M)m=0.\ $

$(2):\ $Let $a,b\notin(N:_{R}cm),\ $that is, $acm\notin N$ and $bcm\notin
N.\ $Choose $x\in(N:_{R}M)$ and $acxm\neq0.\ $Then we have $0\neq a(b+x)cm\in
N.\ $Note that $b+x$ must be nonunit. As $N\ $is weakly 1-absorbing prime,
$a(b+x)m\in N$ or $cm\in N\ $which yields that $abm\in N$ or $cm\in N.$ This
is a contradiction. Hence, we have $ac(N:_{R}M)m=0.\ $Likewise, we have
$bc(N:_{R}M)m=0.\ $Now, we will show that $a(N:_{R}M)^{2}m=0.\ $Suppose that
$a(N:_{R}M)^{2}m\neq0.\ $Then there exist $x,y\in(N:_{R}M)$ such that
$axym\neq0.\ $Then by $ab(N:_{R}M)m=ac(N:_{R}M)m=0,\ $we have $0\neq
axym=a(b+x)(c+y)m\in N.\ $Since $abm\notin N$ and$\ cm\notin N,$ $(b+x)$ and
$(c+y)$ are nonunits. As $N\ $is weakly classical 1-absorbing prime, we have
$a(b+x)m\in N$ or $(c+y)m\in N.\ $Which implies that $abm\in N\ $or $cm\in N$.
This is a contradiction. Thus, we have $a(N:_{R}M)^{2}m=0.\ $Similarly one can
prove that $b(N:_{R}M)^{2}m=c(N:_{R}M)^{2}m=0.$\ Now, we will show that
$(N:_{R}M)^{3}m=0.\ $Suppose to the contrary. Then there exist $x,y,z\in
(N:_{R}M)$ such that $xyzm\neq0.\ $Then we have $0\neq(a+x)(b+y)(c+z)m=xyzm\in
N.\ $Also, $(a+x),(b+y),(c+z)$ are nonunits. Then we conclude that
$(a+x)(b+y)m\in N$ or $(c+z)m\in N.\ $This implies that $abm\in N$ or $cm\in
N\ $which is a contradiction. Hence, $(N:_{R}M)^{3}m=0.$
\end{proof}

A submodule $N$ of an $R$-module $M$ is called a \textit{nilpotent submodule}
if $(N:_{R}M)^{k}N=0$ for some positive integer $k$ \cite{Ali}. In particular,
an element $m\in M$ is nilpotent in $M$ if $\ Rm$ is a nilpotent submodule of
$M$.

\begin{theorem}
\label{theo8} If $N$ is a weakly classical 1-absorbing prime submodule of $M$
that is not classical 1-absorbing prime submodule, then there exists classical
1-quadruple-zero $\left(  a,b,c,m\right)  $ of $N.\ $If $a,b\notin
(N:_{R}cm),\ $then $(N:_{R}M)^{3}N=0$ and thus $N$ is nilpotent. Furthermore,
if $M$ is a multiplication module, then $N^{4}=0.\ $
\end{theorem}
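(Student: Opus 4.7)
The plan is to chain the definition of a classical 1-quadruple-zero with Theorem \ref{theo7}. The first assertion is essentially a tautology: since $N$ fails to be a classical 1-absorbing prime submodule, there exist nonunits $a,b,c \in R$ and $m \in M$ with $abcm \in N$, $abm \notin N$ and $cm \notin N$. The weakly classical 1-absorbing prime hypothesis then forces $abcm = 0$, so $(a,b,c,m)$ is a classical 1-quadruple-zero of $N$.

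For the second assertion, assume $a,b \notin (N:_{R}cm)$. Theorem \ref{theo7}(2) already gives $(N:_{R}M)^{3}m = 0$, and the key step is to upgrade this to $(N:_{R}M)^{3}N = 0$. My plan is to translate $m$ by an arbitrary $n \in N$ and reapply Theorem \ref{theo7}. Concretely, for $n \in N$ I would verify that $(a,b,c,m+n)$ is still a classical 1-quadruple-zero of $N$ with $a,b \notin (N:_{R}c(m+n))$. Since $abcN = 0$ by Theorem \ref{theo7}(1), we have $abc(m+n) = abcm + abcn = 0$; and since $n \in N$, each of $abn, cn, acn, bcn$ already lies in $N$, so the membership (resp.\ non-membership) of $abm$, $cm$, $acm$, $bcm$ in $N$ is unchanged when $m$ is replaced by $m+n$. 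Applying Theorem \ref{theo7}(2) to $(a,b,c,m+n)$ then gives $(N:_{R}M)^{3}(m+n) = 0$, and subtracting $(N:_{R}M)^{3}m = 0$ yields $(N:_{R}M)^{3}n = 0$. Since $n$ was arbitrary, $(N:_{R}M)^{3}N = 0$, which shows that $N$ is nilpotent.

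For the last statement, since $M$ is a multiplication module, $N = (N:_{R}M)M$, so in Ameri's product of submodules we get $N^{4} = (N:_{R}M)^{4}M = (N:_{R}M)^{3}\bigl((N:_{R}M)M\bigr) = (N:_{R}M)^{3}N = 0$. The main obstacle in this argument is the additive-translation step: one has to check carefully that replacing $m$ by $m+n$ preserves both the classical 1-quadruple-zero structure and the side hypothesis $a,b \notin (N:_{R}cm)$, so that Theorem \ref{theo7}(2) is genuinely applicable to the translated tuple; the rest of the proof is then a direct consequence.
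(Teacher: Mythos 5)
Your proof is correct, and for the key step it takes a genuinely cleaner route than the paper. The paper proves $(N:_{R}M)^{3}N=0$ by assuming $xyzn\neq 0$ for some $x,y,z\in(N:_{R}M)$, $n\in N$, and expanding $(a+x)(b+y)(c+z)(m+n)=xyzn$, claiming all fifteen other terms vanish by Theorem \ref{theo7}; in fact the cross terms involving $n$ (such as $abzn$, $aycn$, $xyzm$'s analogue $xycn$, etc.) do \emph{not} follow from Theorem \ref{theo7} applied to $(a,b,c,m)$ alone, so the paper's expansion silently uses exactly the translation fact you make explicit. Your argument --- verify that $(a,b,c,m+n)$ is again a classical 1-quadruple-zero with $a,b\notin(N:_{R}c(m+n))$ (which works because $abcN=0$ kills $abcn$ while $abn,cn,acn,bcn\in N$ leave all the non-membership conditions intact), apply Theorem \ref{theo7}(2) to the translated tuple to get $(N:_{R}M)^{3}(m+n)=0$, and subtract $(N:_{R}M)^{3}m=0$ --- avoids the expansion entirely and actually closes this gap. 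The first assertion (existence of a quadruple-zero) and the multiplication-module computation $N^{4}=(N:_{R}M)^{3}N=0$ are handled exactly as in the paper.
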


\begin{proof}
Suppose that $N$ is a weakly classical 1-absorbing prime submodule of $M$ that
is not a classical 1-absorbing prime submodule. Then there exists a classical
1-quadruple-zero $\left(  a,b,c,m\right)  $ of $N$ for some nonunits $a,b,c\in
R$ and $m\in M$. Suppose that $a,b\notin(N:_{R}cm).\ $Now, we will show that
$(N:_{R}M)^{3}N=0.\ $Suppose that $(N:_{R}M)^{3}N\neq0$. Then there are
$x,y,z\in(N:_{R}M)$ and $n\in N$ such that $xyzn\neq0$. By Theorem
\ref{theo7}, we have $0\neq(a+x)(b+y)(c+z)(m+n)=xyzn\in N$. Since $N$ is a
weakly classical 1-absorbing prime submodule, we have $(a+x)(b+y)(m+n)\in N$
or $(c+z)(m+n)\in N$. Then we have $abm\in N$ or $cm\in N$ which is a
contradiction. Thus, we obtain $(N:_{R}M)^{3}N=0$ and $N$ is nilpotent. Now,
suppose that $M\ $is a multiplication module. Then it is clear that
$N^{4}=(N:M)^{3}N=0.$
\end{proof}

\begin{proposition}
Let $M$ be a multiplication module and $N$ a proper submodule of $M$. The
following conditions are equivalent.

(1)$\ N$ is a weakly classical 1-absorbing prime submodule of $M$.

(2)\ If $0\neq$ $KLPm\subseteq N$ for some proper submodules $K,L,P\ $of $M$
and $m\in M$, then either $KLm\subseteq N$ or $Pm\subseteq N$ .
\end{proposition}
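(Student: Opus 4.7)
The equivalence is essentially a submodule-level reformulation of condition (8) of Theorem~\ref{tmain1}, translated through the correspondence $N \leftrightarrow (N:_R M)$ between submodules of the multiplication module $M$ and ideals of $R$. The key identities I will use are the product rules $KLPm = (K:_R M)(L:_R M)(P:_R M)m$, $KLm = (K:_R M)(L:_R M)m$, and $Pm = (P:_R M)m$, which follow from Ameri's definition of the submodule product in a multiplication module together with $N = (N:_R M)M$.

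For $(1) \Rightarrow (2)$, suppose $N$ is weakly classical 1-absorbing prime and take proper submodules $K, L, P$ of $M$ and $m \in M$ with $0 \neq KLPm \subseteq N$. Writing $I = (K:_R M)$, $J = (L:_R M)$, $H = (P:_R M)$, each of these ideals is proper: if $I = R$, then $K = IM = M$, contradicting properness of $K$, and similarly for $J,H$. The product identities convert the hypothesis to $0 \neq IJHm \subseteq N$, so Theorem~\ref{tmain1}(8) yields $IJm \subseteq N$ or $Hm \subseteq N$, which translates back to $KLm \subseteq N$ or $Pm \subseteq N$.

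For $(2) \Rightarrow (1)$, assume (2) and take nonunits $a, b, c \in R$ with $0 \neq abcm \in N$. Set $K = aM$, $L = bM$, $P = cM$. The product identities give $KLPm = abc \cdot Rm$, which contains $abcm \neq 0$ and is contained in $N$ (since $abcm \in N$ and $N$ is a submodule, so $Rabcm \subseteq N$). Condition (2) applied to these submodules yields $KLm = ab \cdot Rm \subseteq N$ or $Pm = c \cdot Rm \subseteq N$, from which $abm \in N$ or $cm \in N$, as required.

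The main obstacle in the second direction is ensuring that $K = aM$, $L = bM$, $P = cM$ are genuinely proper submodules; for a nonunit $a \in R$ one can still have $aM = M$ (for instance $5(\mathbb{Z}/6\mathbb{Z}) = \mathbb{Z}/6\mathbb{Z}$), which would block a direct appeal to (2). The cleanest remedy is to sidestep the pathological case by invoking Theorem~\ref{tmain1}(8) directly on the proper ideals $Ra$, $Rb$, $Rc$: since $0 \neq (Ra)(Rb)(Rc)m = Rabcm \subseteq N$, part (8) gives $Rabm \subseteq N$ or $Rcm \subseteq N$, i.e.\ $abm \in N$ or $cm \in N$. Hence the proof really only uses the ideal-level information, and the submodule-level statement (2) follows via the multiplication-module correspondence regardless of whether $aM$, $bM$, $cM$ happen to be proper.
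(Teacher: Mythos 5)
Your $(1)\Rightarrow(2)$ direction is correct and is essentially the paper's own argument: represent $K,L,P$ as $IM,JM,QM$ (the colon ideals $(K:_{R}M)$, etc.\ are proper precisely because $K,L,P$ are), translate $KLPm$ into $IJQm$, and invoke condition (8) of Theorem \ref{tmain1}. You are in fact slightly more careful than the paper here, since you verify that the representing ideals are proper.

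The $(2)\Rightarrow(1)$ direction, however, contains a genuine gap. You correctly spot the real difficulty --- for a nonunit $a$ one may have $aM=M$ (your example $5\cdot\mathbb{Z}_{6}=\mathbb{Z}_{6}$ is apt), so $K=aM$ need not be a proper submodule and (2) cannot be applied to it --- but your proposed remedy is circular. Condition (8) of Theorem \ref{tmain1} is one of the \emph{equivalent reformulations} of ``$N$ is a weakly classical 1-absorbing prime submodule''; it becomes available only after (1) is established. In the direction $(2)\Rightarrow(1)$ the sole hypothesis is the submodule-level statement (2), so the step ``part (8) gives $Rabm\subseteq N$ or $Rcm\subseteq N$'' assumes exactly the conclusion being proved, and after this ``fix'' your argument no longer uses (2) at all. (For what it is worth, the paper's own proof of this direction silently has the same properness issue: it sets $K:=IM$ for a proper ideal $I$ and feeds $K$ to (2) without checking $IM\neq M$. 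But identifying the problem and then resolving it by invoking the conclusion is not a proof.) To actually close the gap you would need to handle the degenerate case separately --- e.g.\ when $aM$, $bM$ or $cM$ equals $M$, use the structure of multiplication modules (for finitely generated $M$, $aM=M$ forces $Ra+Ann_{R}(M)=R$, so $a$ acts invertibly on $M$) to reduce to a quadruple whose associated submodules are proper --- or else prove the statement under whatever implicit convention makes $IM$ proper for proper $I$.
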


\begin{proof}
$(1)\Rightarrow(2):\ $Let $0\neq KLPm\subseteq N$ for some proper submodules
$K,L,P$\ of $M$ and $m\in M$. Since $M$ is a multiplication module, there are
ideals $I,J,Q$ of $R$ such that $K=IM,L=JM$ and $P=QM$. Since $0\neq
KLPm\subseteq N$ we have $0\neq IJQm\subseteq N,$ and thus we have
$IJm\subseteq N$ or $Qm\subseteq N$. Hence $KLm\subseteq N$ or $Pm\subseteq N$.

$(2)\Rightarrow(1):\ $Suppose that $0\neq IJQm\subseteq N$ for some proper
ideals $I,J,Q\ $of $R$ and some $m\in M$. It is sufficient to set
$K:=IM$,$\ L:=JM$ and $P:=QM$ in part (2). Hence $0\neq KLPm\subseteq N$
implies that $KLm\subseteq N$ or $Pm\subseteq N$. This implies that
$IJm\subseteq N$ or $Qm\subseteq N.\ $Consequently, $N$ is a weakly classical
1-absorbing prime submodule of $M$ by Theorem \ref{tmain1}.
\end{proof}

Now, we are ready to give a new characterization of weakly classical
1-absorbing prime submodules of modules over um-rings.

\begin{theorem}
\label{tmain2}Let $R$ be a um-ring and $M$ be an $R$-module. For any proper
submodule $N$ of $M,\ $the followings are equivalent.

(1)\ $N\ $is a weakly classical 1-absorbing prime submodule.

(2) For every nonunits $a,b,c\in R,\ (N:_{M}abc)=(N:_{M}ab)$ or $(N:_{M}%
abc)=(0:_{M}abc)$ or $(N:_{M}abc)=(N:_{M}c).\ $

(3)\ For every nonunits $a,b,c\in R$ and every submodule $L$ of $M,\ 0\neq
abcL\subseteq N$ implies that $abL\subseteq N$ or $cL\subseteq N.$

(4)\ For every nonunits $a,b\in R\ $and every submodule $L$ of $M$ with
$abL\nsubseteq N,\ (N:_{R}abL)=(0:_{R}abL)$ or $(N:_{R}abL)=(N:_{R}L).$

(5) For every nonunits $a,b\in R,\ $every proper ideal $J$ of $R\ $and every
submodule $L$ of $M$ with $0\neq abJL\subseteq N$ implies that $abL\subseteq
N$ or $JL\subseteq N.$

(6)\ For every nonunits $a\in R,\ $every proper ideals $I,J\ $of $R$ and every
submodule $L\ $of $M\ $with $0\neq aIJL\subseteq N$ implies that $aIL\subseteq
N$ or $JL\subseteq N.\ $

(7) For every proper ideals $I,J,K\ $of $R\ $and every submodule $L\ $of $M$
with $IJL\nsubseteq N,\ (N:_{R}IJL)=(0:_{R}IJL)$ or $(N:_{R}IJL)=(N:_{R}L).$

(8)\ For every proper ideals $I,J,K\ $of $R$ and every submodule $L$ of $M$,
$0\neq IJKL\subseteq N$ implies that $IJL\subseteq N$ or $KL\subseteq N.\ $
\end{theorem}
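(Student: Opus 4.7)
The plan is to prove the chain $(1)\Rightarrow(2)\Rightarrow(3)\Rightarrow(4)\Rightarrow(5)\Rightarrow(6)\Rightarrow(7)\Rightarrow(8)\Rightarrow(1)$, patterned on the proof of Theorem~\ref{tmain1}. The one genuinely new ingredient is the $um$-ring hypothesis, whose role throughout is to collapse an equality of the form ``submodule $=$ finite union of submodules'' into ``submodule $=$ one of the listed summands''.

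The step $(1)\Rightarrow(2)$ is where the $um$-ring hypothesis is first used. Theorem~\ref{tmain1}(2) applied to $N$ gives the set-theoretic identity $(N:_{M}abc)=(0:_{M}abc)\cup(N:_{M}ab)\cup(N:_{M}c)$ for all nonunits $a,b,c\in R$. The left side is an $R$-submodule of $M$ written as the union of three of its own $R$-submodules, and the $um$-ring axiom applied to the $R$-module $(N:_{M}abc)$ forces equality with one of the three pieces. The implications $(2)\Rightarrow(3)\Rightarrow(4)\Rightarrow(5)$ are routine translations: take $L\subseteq (N:_{M}abc)$ for $(2)\Rightarrow(3)$; use the elementary fact that a subgroup equal to the union of two subgroups equals one of them for $(3)\Rightarrow(4)$; and apply $(4)$ to $J\subseteq(N:_{R}abL)$, ruling out $J\subseteq(0:_{R}abL)$ via $abJL\neq 0$, for $(4)\Rightarrow(5)$. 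Step $(5)\Rightarrow(6)$ is the submodule-level lift of the corresponding step in Theorem~\ref{tmain1}: pick $x\in I$ with $axL\not\subseteq N$, split on whether $axJL\neq 0$, and in the degenerate subcase choose $y\in I$ with $ayJL\neq 0$ and work with $x+y$ to close.

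For $(6)\Rightarrow(7)$, given $c\in(N:_{R}IJL)$ with $cIJL\neq 0$, $c$ must be a nonunit (otherwise $IJL\subseteq N$), so $(6)$ yields $cIL\subseteq N$, the alternative $JL\subseteq N$ being ruled out by $IJL\not\subseteq N$. The delicate part is to upgrade $cIL\subseteq N$ to $cL\subseteq N$; my plan is to exhibit $(N:_{L}c)$ and $(0:_{L}cIJ)$ as a covering of $L$ by its $R$-submodules and invoke the $um$-ring axiom on $L$, ruling out $L=(0:_{L}cIJ)$ by $cIJL\neq 0$. Then $(7)\Rightarrow(8)$ is immediate: given $0\neq IJKL\subseteq N$ with $IJL\not\subseteq N$, one has $K\subseteq(N:_{R}IJL)$ and $K\not\subseteq(0:_{R}IJL)$, so $(7)$ forces $KL\subseteq N$; and $(8)\Rightarrow(1)$ is obtained by specialization of the ideals to principal ideals and of the submodule to a cyclic submodule. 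The main obstacle is verifying that $(N:_{L}c)\cup(0:_{L}cIJ)$ really exhausts $L$: for $l\in L$ with $cl\notin N$, applying the definition $(1)$ to $0\neq cijl\in N$ for nonunits $i\in I$, $j\in J$ with $cijl\neq 0$ only yields $ijl\in N$ rather than the desired $cIJl=0$, so a finer argument is required here—either iterating $(6)$ on cyclic submodules of the form $Rl$ together with a further $um$-ring reduction on the ideal $IJ$, or else routing through $(8)$ by first establishing $(6)\Rightarrow(8)$ via a parallel $um$-ring decomposition of $K$ into $K\cap(N:_{R}IL)\cap(N:_{R}JL)$ and $K\cap(0:_{R}IJL)$ and then deducing $(7)$ from $(8)$ by the one-line reduction just indicated.
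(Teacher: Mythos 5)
Your chain $(1)\Rightarrow\cdots\Rightarrow(8)\Rightarrow(1)$ is exactly the paper's, and the steps $(1)\Rightarrow(2)\Rightarrow(3)\Rightarrow(4)\Rightarrow(5)\Rightarrow(6)$ and $(7)\Rightarrow(8)\Rightarrow(1)$ are sketched correctly and agree with the paper's proof. The genuine gap is at $(6)\Rightarrow(7)$, and it is the one you yourself flag without resolving: applying $(6)$ to $0\neq cIJL\subseteq N$ with $c$ in the ``nonunit element'' slot only produces $cIL\subseteq N$, and neither fallback closes the distance to $cL\subseteq N$. The covering $L=(N:_{L}c)\cup(0:_{L}cIJ)$ cannot be established (as you note, the definition only yields $ijl\in N$, not $cIJl=0$), and your alternative decomposition of $K$ for a direct $(6)\Rightarrow(8)$ has the same defect: applying $(6)$ to $0\neq kIJL\subseteq N$ and to $0\neq kJIL\subseteq N$ gives $kIL\subseteq N$ and $kJL\subseteq N$ (the branches $JL\subseteq N$ and $IL\subseteq N$ being excluded by $IJL\nsubseteq N$), which still does not give $kL\subseteq N$.

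The missing idea is a role swap in the application of $(6)$. For $a\in(N:_{R}IJL)$ with $aIJL\neq 0$ (so $a$ is a nonunit, else $IJL\subseteq N$), do not put $a$ in the element slot; put the principal ideal $Ra$ in the slot of the second ideal, and let elements of the original $J$ occupy the element slot. Concretely, choose $y\in J$ with $ayIL\neq 0$ and write $0\neq ayIL=yI(Ra)L\subseteq N$; then $(6)$ gives $yIL\subseteq N$ or $(Ra)L\subseteq N$, and the second branch is precisely the desired $aL\subseteq N$. The first branch is eliminated by the usual $x$, $x+y$ perturbation using a witness $x\in J$ with $xIL\nsubseteq N$ (which exists since $IJL\nsubseteq N$): if $axIL\neq 0$, then $(6)$ applied to $0\neq xI(Ra)L\subseteq N$ forces $(Ra)L\subseteq N$ directly; if $axIL=0$, then $0\neq a(x+y)IL=(x+y)I(Ra)L\subseteq N$, and $(x+y)IL\subseteq N$ together with $yIL\subseteq N$ would contradict $xIL\nsubseteq N$, again forcing $(Ra)L\subseteq N$. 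This yields $(N:_{R}IJL)\subseteq(0:_{R}IJL)\cup(N:_{R}L)$, and the two-subgroup union fact finishes $(7)$. With this replacement your outline becomes the paper's proof.
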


\begin{proof}
$(1)\Rightarrow(2):\ $Let $m\in(N:_{M}abc).\ $Then we have $abcm\in N.\ $If
$abcm=0,\ $then $m\in(0:_{M}abc).\ $Now, assume that $abcm\neq0.\ $As $N\ $is
a weakly classical 1-absorbing prime submodule, we have $abm\in N$ or $cm\in
N.\ $Thus, we have $m\in(N:_{M}ab)\cup(N:_{M}c).\ $This gives $(N:_{M}%
abc)=(N:_{M}ab)\cup(N:_{M}c)\cup(0:_{M}abc).\ $As $R\ $is a $um$-ring, we have
$(N:_{M}abc)=(N:_{M}ab)$ or $(N:_{M}abc)=(0:_{M}abc)$ or $(N:_{M}%
abc)=(N:_{M}c).\ $

$(2)\Rightarrow(3):\ $Let $0\neq abcL\subseteq N.$ Then we have $L\subseteq
(N:_{M}abc).\ $Since $0\neq abcL,\ (N:_{M}abc)\neq(0:_{M}abc).\ $This gives
$L\subseteq(N:_{M}abc)=(N:_{M}ab)$ or $L\subseteq(N:_{M}abc)=(N:_{M}c).\ $Then
we have $abL\subseteq N\ $or $cL\subseteq N.$

$(3)\Rightarrow(4):\ $Suppose that $abL\nsubseteq N$ and take $x\in
(N:_{R}abL).\ $Then we have $abxL\subseteq N.\ $If $abxL=0,\ $then we get
$x\in(0:_{R}abL).\ $Now, assume that $abxL\neq0.\ $Then by part (3), we
conclude that $xL\subseteq N,\ $that is, $x\in(N:_{R}L).\ $This gives that
$(N:_{R}abL)=(0:_{R}abL)\cup(N:_{R}L).\ $The follows from the fact that if a
submodule is a union of two submodules, then it must be equal to one of them.

$(4)\Rightarrow(5):\ $Let $0\neq abJL\subseteq N$\ and assume that
$abL\nsubseteq N.\ $Then we have $J\subseteq(N:_{R}abL)$ and $(N:_{R}%
abL)\neq(0:_{R}abL).\ $Thus by part (4), we get $J\subseteq(N:_{R}%
abL)=(N:_{R}L).\ $This implies that $JL\subseteq N\ $which completes the proof.

$(5)\Rightarrow(6):\ $Suppose that $0\neq aIJL\subseteq N\ $and $JL\nsubseteq
N.\ $Now, take $x\in I.\ $Since $0\neq aIJL,\ $there exists $y\in I$ such that
$ayJL\neq0.\ $As $0\neq ayJL\subseteq N,\ $by part (5), $ayL\subseteq N.\ $If
$0\neq axJL\subseteq N,\ $then similarly we have $axL\subseteq N.\ $If
$0=axJL,\ $then $0\neq ayJL=a(x+y)JL\subseteq N.\ $Then similar argument shows
that $a(x+y)L\subseteq N\ $which implies that $axL\subseteq N.\ $Hence, we
conclude that $aIL\subseteq N.\ $

$(6)\Rightarrow(7):\ $Suppose that $IJL\nsubseteq N$ and $a\in(N:_{R}%
IJL).\ $Then $aIJL\subseteq N$ and there exists $x\in J$ such that
$xIL\nsubseteq N.$\ Assume that $aIJL=0.\ $Then we have $a\in(0:_{R}%
IJL).\ $Now, assume that $aIJL\neq0.\ $Then there exists $y\in J$ such that
$0\neq ayIL=yI(Aa)L\subseteq N.\ $Then by part (6), we conclude that
$yIL\subseteq N$ or $(Aa)L\subseteq N.\ $In the second case, we have
$a\in(N:_{R}L).\ $So assume that $yIL\subseteq N.\ $If $0\neq
axIL=xI(Aa)L\subseteq N,\ $then by part (6), we have $(Aa)L\subseteq N$ which
implies that $a\in(N:_{R}L).\ $So we may assume that $axIL=0.$ In this case,
we have $0\neq ayIL=a(x+y)IL=(x+y)I(Aa)L\subseteq N.\ $Again by part (6), we
conclude that $(x+y)IL\subseteq N$ or $(Aa)L\subseteq N.\ $If
$(x+y)IL\subseteq N,\ $then we get $xIL\subseteq N$ because $yIL\subseteq
N.\ $This is a contradiction. Thus we conclude that $(Aa)L\subseteq N,\ $that
is, $a\in(N:_{R}L).\ $By above arguments, we conclude that $(N:_{R}%
IJL)\subseteq(0:_{R}IJL)\cup(N:_{R}L).\ $Since the other inclusion is always
true, we have the equality $(N:_{R}IJL)=(0:_{R}IJL)\cup(N:_{R}L)$.\ In this
case, we have $(N:_{R}IJL)=(0:_{R}IJL)$ or $(N:_{R}IJL)=(N:_{R}L).$

$(7)\Rightarrow(8):\ $Suppose that $0\neq IJKL\subseteq N$ and $IJL\nsubseteq
N$.\ Then we have $K\subseteq(N:_{R}IJL)\neq(0:_{R}IJL).$ Then by part (7), we
have $K\subseteq(N:_{R}IJL)=(N:_{R}L)$ which implies that $KL\subseteq N.\ $

$(8)\Rightarrow(1):\ $It is clear.
\end{proof}

By \cite[Theorem 4]{Zeynep}, we know that if $N\ $is a classical 1-absorbing
prime submodule of $M,\ $then $(N:_{R}M)\ $is a 1-absorbing prime ideal of
$R.\ $One can naturally ask whether this property can be extended to weakly
classical 1-absorbing prime or not. Now, we give a negative answer to this
question with the following example.

\begin{example}
Let $p$ be a prime number and consider $%
\mathbb{Z}
$-module $%
\mathbb{Z}
_{p^{2}}$. Assume that $N=(\overline{0}).\ $Then $N\ $is trivially weakly
classical 1-absorbing prime submodule. However, $(N:_{%
\mathbb{Z}
}%
\mathbb{Z}
_{p^{2}})=p^{2}%
\mathbb{Z}
$ is not a weakly 1-absorbing prime ideal.
\end{example}

The following is an immediate consequences of Theorem \ref{tmain1}.

\begin{corollary}
Let $N\ $be a proper submodule of $M.\ $Then $N\ $is a weakly classical
1-absorbing prime submodule of $M\ $if and only if for every nonunits $a,b\in
R$ and $m\in M,\ (N:_{R}abm)=(0:_{R}abm)$ or $(N:_{R}abm)=(N:_{R}m)$ or
$(N:_{R}abm)=R.$
\end{corollary}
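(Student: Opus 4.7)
The plan is to derive this corollary by reducing it to part (4) of Theorem \ref{tmain1}, with the third equality option $(N:_R abm) = R$ absorbing the case that Theorem \ref{tmain1}(4) explicitly excluded, namely $abm \in N$. So the proof splits into a trivial case and an application of the earlier theorem.

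For the forward direction, assume $N$ is weakly classical 1-absorbing prime and fix nonunits $a,b \in R$ and $m \in M$. If $abm \in N$, then every element of $R$ sends $abm$ into $N$, so $(N:_R abm) = R$. Otherwise $abm \notin N$, and Theorem \ref{tmain1}(4) (which is proved to be equivalent to (1)) gives $(N:_R abm) = (0:_R abm)$ or $(N:_R abm) = (N:_R m)$. In every case one of the three listed equalities holds.

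For the converse, suppose the trichotomy of residuals holds and pick nonunits $a,b,c \in R$ and $m \in M$ with $0 \neq abcm \in N$. Then $c \in (N:_R abm)$. By assumption, one of the following occurs:
\begin{itemize}
\item $(N:_R abm) = (0:_R abm)$, forcing $abcm = 0$, which contradicts $abcm \neq 0$;
\item $(N:_R abm) = (N:_R m)$, giving $c \in (N:_R m)$, i.e.\ $cm \in N$;
\item $(N:_R abm) = R$, so $1 \in (N:_R abm)$, i.e.\ $abm \in N$.
\end{itemize}
Thus $abm \in N$ or $cm \in N$, which shows that $N$ is weakly classical 1-absorbing prime.

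There is no real obstacle here: the content lies entirely in Theorem \ref{tmain1}, and the only subtlety to be careful about is properly handling the degenerate case $abm \in N$ (which is why $R$ must be included as a third possibility) and, in the converse direction, using $abcm \neq 0$ to rule out the $(N:_R abm) = (0:_R abm)$ branch.
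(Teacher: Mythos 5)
Your proof is correct and is exactly the argument the paper intends: the corollary is stated there as an immediate consequence of Theorem \ref{tmain1}, and your reduction to part (4) (with the extra case $(N:_{R}abm)=R$ absorbing the excluded situation $abm\in N$) together with the direct verification of the converse fills in precisely that gap. No issues.
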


\begin{theorem}
\label{tmult}Let $M\ $be a finitely generated multiplication module such that
$Ann_{R}(M)\ $is a 1-absorbing prime ideal of $R$. For a proper submodule $N$
of $M,\ $the followings are equivalent.

(1)\ $N\ $is a weakly classical 1-absorbing prime submodule of $M.$

(2) If $0\neq N_{1}N_{2}N_{3}N_{4}\subseteq N$ for some proper submodules
$N_{1},N_{2},N_{3}\ $of $M\ $and some submodule $N_{4}\ $of$\ M,\ $then
$N_{1}N_{2}N_{4}\subseteq N$ or $N_{3}N_{4}\subseteq N.\ $

(3)\ $(N:_{R}M)\ $is a weakly 1-absorbing prime ideal of $R.$

(4)\ $N=PM$ for some weakly 1-absorbing prime ideal $P\ $of $R$ with
$Ann_{R}(M)\subseteq P.$
\end{theorem}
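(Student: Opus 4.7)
The strategy is to run the equivalences using the submodule--ideal correspondence available in a finitely generated multiplication module: every submodule $N$ factors as $N=(N:_{R}M)M$, and $(IM:_{R}M)=I$ whenever $Ann_{R}(M)\subseteq I$. Since $M$ is finitely generated, a nonunit $a\in R$ with $aM=M$ is in fact an automorphism of $M$ (Vasconcelos); the 1-absorbing hypothesis on $Ann_{R}(M)$ is reserved for precisely these ``$a$ acts as a unit on $M$'' edge cases and for degeneracies where an ideal product vanishes on $M$.

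The equivalence $(3)\Leftrightarrow(4)$ is immediate: set $P=(N:_{R}M)$, which always contains $Ann_{R}(M)$ and satisfies $N=PM$; conversely if $N=PM$ with $Ann_{R}(M)\subseteq P$, then $(N:_{R}M)=P$, so weakly 1-absorbing on $P$ transfers between the two formulations. For $(3)\Rightarrow(1)$, given $0\neq abcm\in N$ with $a,b,c$ nonunits, write $Rm=I_{m}M$ for $I_{m}=(Rm:_{R}M)$, so that $0\neq abcI_{m}\subseteq P$. In the cyclic case $M=Rm$ one has $I_{m}=R$ and $0\neq abc\in P$ directly, and the weakly 1-absorbing property of $P$ yields $ab\in P$ or $c\in P$. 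Otherwise $I_{m}$ and the product ideal $cI_{m}$ are proper; view $P$ as a weakly classical 1-absorbing prime submodule of the $R$-module $R$ via Corollary \ref{cext1} and apply Theorem \ref{tmain1}(5) with $m=1$ and the proper ideal $cI_{m}$ to conclude $ab\in P$ or $cI_{m}\subseteq P$. Either conclusion gives $abm\in N$ or $cm\in N$ after multiplying through by $M$.

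The equivalence $(1)\Leftrightarrow(2)$ is handled by a parallel multiplication-module argument. For $(1)\Rightarrow(2)$, write $N_{i}=I_{i}M$ with $I_{i}$ proper so that $0\neq I_{1}I_{2}I_{3}N_{4}\subseteq N$, assume $I_{1}I_{2}N_{4}\nsubseteq N$ and $I_{3}N_{4}\nsubseteq N$, pick witnesses $n_{1},n_{2}\in N_{4}$, and apply Theorem \ref{tmain1}(8) element-wise to $n_{1},n_{2},n_{1}+n_{2}$ in the standard combining pattern. For $(2)\Rightarrow(1)$, the choice $N_{1}=aM$, $N_{2}=bM$, $N_{3}=cM$, $N_{4}=Rm$ makes $N_{1}N_{2}N_{3}N_{4}=R(abcm)$ a nonzero submodule of $N$; when all three of $aM,bM,cM$ are proper, (2) closes the argument, and an edge case such as $aM=M$ is dispatched by the automorphism reduction (writing $1-ar\in Ann_{R}(M)$ for some $r$) together with the 1-absorbing hypothesis on $Ann_{R}(M)$.

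The hard direction is $(1)\Rightarrow(3)$. Given $0\neq abc\in P$ with $a,b,c$ nonunits, if $abcM=0$ then $abc\in Ann_{R}(M)$ and the 1-absorbing hypothesis on $Ann_{R}(M)$ immediately gives $ab\in Ann_{R}(M)\subseteq P$ or $c\in Ann_{R}(M)\subseteq P$. If $abcM\neq 0$, assume for contradiction $ab\notin P$ and $c\notin P$, fix $m_{1}$ with $abm_{1}\notin N$, $m_{2}$ with $cm_{2}\notin N$, and an anchor $m_{0}$ with $abcm_{0}\neq 0$, and apply (1) to whichever of $abcm_{0}$, $abc(m_{0}+m_{1})$, $abc(m_{0}+m_{2})$ and $abc(m_{0}+m_{1}+m_{2})$ are nonzero; combining the resulting memberships contradicts either $abm_{1}\notin N$ or $cm_{2}\notin N$. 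The main obstacle sits in this step: bridging the element-wise hypothesis (1) to the module-wide conclusion (3) demands this anchor-witness perturbation, and the vanishing cases $abcm_{i}=0$ are precisely where the 1-absorbing hypothesis on $Ann_{R}(M)$ must be called upon to absorb the degenerate pieces.
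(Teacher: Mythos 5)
Your reductions $(3)\Leftrightarrow(4)$ and $(3)\Rightarrow(1)$ are sound, and the latter -- writing $Rm=I_{m}M$, pushing $abcI_{m}$ into $P=(N:_{R}M)$, and invoking Corollary \ref{cext1} together with Theorem \ref{tmain1}(5) -- is a legitimate alternative to the paper's route, which instead quotes Theorem \ref{tmain2} at the submodule level. The genuine gap is in your two directions $(1)\Rightarrow(3)$ and $(1)\Rightarrow(2)$, where you must pass from the element-wise hypothesis to a module-wide conclusion. Your anchor-witness scheme for $(1)\Rightarrow(3)$ does not close. Take $abcm_{0}\neq0$, $abcm_{1}=abcm_{2}=0$, $abm_{1}\notin N$, $cm_{2}\notin N$, and consider the four dichotomies you propose to combine, namely those at $m_{0}$, $m_{0}+m_{1}$, $m_{0}+m_{2}$, $m_{0}+m_{1}+m_{2}$. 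They are simultaneously satisfiable without forcing either target membership: put $cm_{0}\in N$, $cm_{1},cm_{2}\notin N$ with $cm_{1}+cm_{2}\in N$, and $abm_{0},abm_{1},abm_{2}\notin N$ with $abm_{0}+abm_{1}\in N$ and $abm_{0}+abm_{2}\in N$. Then the four dichotomies hold via $cm_{0}$, $ab(m_{0}+m_{1})$, $ab(m_{0}+m_{2})$ and $c(m_{0}+m_{1}+m_{2})=cm_{0}+(cm_{1}+cm_{2})$ respectively, and all these memberships are consistent with $N$ being a subgroup; yet $abm_{1}\notin N$ and $cm_{2}\notin N$ survive. So ``combining the resulting memberships'' yields no contradiction. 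Moreover, the vanishing cases give $abc\in Ann_{R}(m_{i})$, not $abc\in Ann_{R}(M)$, so the $1$-absorbing hypothesis on $Ann_{R}(M)$ cannot be ``called upon'' there as you assert. The same defect affects $(1)\Rightarrow(2)$: Theorem \ref{tmain1}(8) is available only at those $n\in N_{4}$ with $I_{1}I_{2}I_{3}n\neq0$, and your $n_{1},n_{2},n_{1}+n_{2}$ pattern silently assumes all three products are nonzero; no anchor is even mentioned, and with one the same obstruction as above reappears.

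For context, the paper does not attempt an element-wise bridge: it cites Theorem \ref{tmain2}(3) and (8) to obtain the submodule-level statements outright. (That citation imports a $um$-ring hypothesis which Theorem \ref{tmult} does not assume, so the published argument is itself not self-contained at this point -- but that does not license replacing it with a ``standard combining pattern'' that provably fails on the configuration above.) An honest element-wise proof has to control the classical $1$-quadruple-zero locus explicitly, for instance through Theorem \ref{theo5} (whose hypothesis excludes quadruple-zeros) or the structural consequences in Theorems \ref{theo7} and \ref{theo8}, or else exploit the multiplication-module structure more seriously. As written, $(1)\Rightarrow(3)$ and $(1)\Rightarrow(2)$ are gaps, not routine verifications.
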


\begin{proof}
$(1)\Rightarrow(2):\ $Suppose that $N\ $is a weakly classical 1-absorbing
prime submodule of $M.$ Let $0\neq N_{1}N_{2}N_{3}N_{4}\subseteq N$ for some
proper submodules $N_{1},N_{2},N_{3}\ $of $M\ $and some submodule $N_{4}%
\ $of$\ M.\ $Since $M\ $is multiplication, there exist proper ideals
$I_{1},I_{2},I_{3}$ of $R$ such that $N_{i}=I_{i}M$ for every $i=1,2,3.\ $Then
we have $0\neq I_{1}I_{2}I_{3}N_{4}\subseteq N.\ $Then by Theorem
\ref{tmain2}, $I_{1}I_{2}N_{4}=N_{1}N_{2}N_{4}\subseteq N$ or $I_{3}%
N_{4}=N_{3}N_{4}\subseteq N.\ $

$(2)\Rightarrow(1):\ $It is similar to $(1)\Rightarrow(2).$

$(1)\Rightarrow(3):\ $Suppose that $N\ $is a weakly classical 1-absorbing
prime submodule,$\ a,b,c\in R$ are nonunits and $0\neq abc\in(N:_{R}%
M).\ \ $Then we have $abcM\subseteq N.$ If $abcM=0,\ $then $ab\in Ann_{R}(M)$
or $c\in Ann_{R}(M).\ $Which implies that $abM\subseteq N$ or $cM\subseteq
N.\ $Now, assume that $0\neq abcM\subseteq N.\ $Then by Theorem \ref{tmain2},
$abM\subseteq N$ or $cM\subseteq N$ which implies that $ab\in(N:_{R}M)$ or
$c\in(N:_{R}M).$ Hence, $(N:_{R}M)\ $is a weakly classical 1-absorbing prime submodule.

$(3)\Rightarrow(4):\ $It is clear.

$(4)\Rightarrow(1):\ $Let $0\neq IJKL\subseteq N=PM$ for some proper ideals
$I,J,K$ of $R\ $and some submodule $L$ of $M.\ $Then we have $IJK(L:_{R}%
M)\subseteq(PM:_{R}M)=P+Ann_{R}(M)=P$ by \cite[Corollary to Theorem 9]{Smith}.
As $P\ $is a weakly 1-absorbing prime ideal, we have $IJ(L:_{R}M)\subseteq P$
or $K(L:_{R}M)\subseteq P.\ $Then we have $IJL\subseteq N$ or $KL\subseteq
N.\ $Then by Theorem \ref{tmain2}, $N\ $is a weakly classical 1-absorbing
prime submodule of $M.$
\end{proof}

Since the proof of the following lemma is similar to the previous proposition
$(1)\Rightarrow(3)$, we omit the proof.

\begin{lemma}
\label{lemfaith}Let $M$ be a faithful $R$-module and $N\ $a weakly classical
1-absorbing prime submodule of $M.\ $Then $(N:_{R}M)$ is a weakly 1-absorbing
prime ideal of $R.$
\end{lemma}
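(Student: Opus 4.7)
The plan is to mirror the implication $(1)\Rightarrow(3)$ in Theorem \ref{tmult}, with the faithfulness of $M$ taking over the role played there by ``$Ann_R(M)$ is a $1$-absorbing prime ideal''. So I fix nonunits $a,b,c\in R$ with $0\neq abc\in(N:_R M)$ and aim to prove $ab\in(N:_R M)$ or $c\in(N:_R M)$, i.e., $abM\subseteq N$ or $cM\subseteq N$.

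Since $abc\in(N:_R M)$, automatically $abcM\subseteq N$. The troublesome branch $abcM=0$ that Theorem \ref{tmult} handled by invoking that $Ann_R(M)$ is $1$-absorbing prime is now vacuous: $M$ being faithful means $Ann_R(M)=0$, so $abcM=0$ would force $abc=0$, contradicting our assumption. Hence $0\neq abcM\subseteq N$, and in particular there exists $m_0\in M$ with $0\neq abcm_0\in N$, which by the weakly classical $1$-absorbing hypothesis gives $abm_0\in N$ or $cm_0\in N$.

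To upgrade from a single element to all of $M$, I would appeal to Theorem \ref{tmain1}(2): the hypothesis $abc\in(N:_R M)$ says $(N:_M abc)=M$, so
\[
M=(0:_M abc)\cup(N:_M ab)\cup(N:_M c),
\]
and $(0:_M abc)\neq M$ by faithfulness. The crux is to collapse this three-term cover to one of the last two submodules: given any $m\in M$, one tests $m$ and $m+m_0$ against the weakly classical $1$-absorbing condition, uses that at least one of them escapes $(0:_M abc)$, and subtracts off the contribution of $m_0$, for which $abm_0\in N$ or $cm_0\in N$ is already known. The main obstacle is this final covering step, since in general a module need not reduce to a single piece of a three-fold submodule cover; the translation trick by $m_0$ is exactly what neutralizes the annihilator submodule $(0:_M abc)$ and forces $M=(N:_M ab)$ or $M=(N:_M c)$, which is the desired conclusion.
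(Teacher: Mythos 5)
Your reduction is set up correctly: faithfulness does kill the branch $abcM=0$, and by Theorem \ref{tmain1}(2) the hypothesis $0\neq abc\in(N:_R M)$ gives the cover $M=(0:_M abc)\cup(N:_M ab)\cup(N:_M c)$ with $(0:_M abc)\neq M$. You also correctly identify that the whole difficulty is collapsing this three-fold cover to $M=(N:_M ab)$ or $M=(N:_M c)$ (the final two-fold collapse is fine, since a group is never the union of two proper subgroups). But the mechanism you propose for that collapse does not work as described, and this is a genuine gap. Take $m$ with $abcm=0$ and your $m_0$ with $abcm_0\neq 0$. Applying the definition to $m+m_0$ gives $ab(m+m_0)\in N$ \emph{or} $c(m+m_0)\in N$, and applying it to $m_0$ gives $abm_0\in N$ \emph{or} $cm_0\in N$; ``subtracting off the contribution of $m_0$'' only succeeds when the two disjunctions select matching branches. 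Nothing rules out, say, $ab(m+m_0)\in N$ together with $abm_0\notin N$, $cm_0\in N$: then $abm=ab(m+m_0)-abm_0\notin N$ and you learn nothing about $cm$, so you cannot place $m$ in $(N:_M ab)\cup(N:_M c)$. In other words, your argument does not exclude the existence of a classical $1$-quadruple-zero $(a,b,c,m)$, which is exactly the obstruction. (Pushing the translation trick further, e.g.\ comparing $m+m_0$ and $m-m_0$, only yields $2abm\in N$ or $2cm\in N$, which is inconclusive unless $2$ is a unit.)

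The paper omits the proof by pointing to Theorem \ref{tmult} $(1)\Rightarrow(3)$, whose key step is the submodule-level implication $0\neq abcL\subseteq N\Rightarrow abL\subseteq N$ or $cL\subseteq N$ of Theorem \ref{tmain2}(3), applied with $L=M$; the element-level analogue without the $um$-ring hypothesis is Theorem \ref{theo5}, which requires that no $(a,b,c,k)$ with $k\in M$ be a classical $1$-quadruple-zero. To complete your argument you would need either to invoke one of those results (and justify its hypotheses in your setting) or to rule out classical $1$-quadruple-zeros of the form $(a,b,c,k)$ directly, which is a perturbation argument in the spirit of Theorem \ref{theo7} (replacing $a,b,c$ by $a+x$, etc.), not a translation of the module element alone. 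As it stands, the crucial covering step is asserted rather than proved.
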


\begin{theorem}
\label{tcar1}Let $M_{1},M_{2}$ be two $R$-modules and $N_{1}$ be a proper
submodule of $M_{1}$. The following conditions are equivalent.

(1)$\ N=N_{1}\times M_{2}$ is a weakly classical 1-absorbing prime submodule
of $M=M_{1}\times M_{2}$.

(2)$\ N_{1}$ is a weakly classical 1-absorbing prime submodule of $M_{1}$ and
if whenever $(a,b,c,m)$ is a classical 1-quadruple-zero of $N_{1}\ $for some
nonunits $a,b,c\in R$ and $m\in M_{1},\ $then $abc\in Ann_{R}(M_{2}).$
\end{theorem}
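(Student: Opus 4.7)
The plan is to prove both implications by exploiting the product structure and the observation that $abc(m_1,m_2)\in N_1\times M_2$ is automatic in the second coordinate, so weakly classical 1-absorbing primeness of $N$ reduces to a condition on the first coordinate, with the quadruple-zero hypothesis bridging the case when the first coordinate vanishes.

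For $(1)\Rightarrow(2)$, first I would show $N_1$ is weakly classical 1-absorbing prime in $M_1$ by an embedding argument. Given $0\neq abcm\in N_1$ with $a,b,c$ nonunits, the element $(m,0)\in M$ satisfies $(0,0)\neq abc(m,0)=(abcm,0)\in N_1\times M_2=N$, so by hypothesis either $ab(m,0)\in N$ or $c(m,0)\in N$, which yields $abm\in N_1$ or $cm\in N_1$. Next, to verify the quadruple-zero condition, I would argue by contradiction: suppose $(a,b,c,m)$ is a classical 1-quadruple-zero of $N_1$ but $abc\notin Ann_R(M_2)$; picking $m_2\in M_2$ with $abcm_2\neq 0$, we get $(0,0)\neq abc(m,m_2)=(0,abcm_2)\in N$, and applying weakly classical 1-absorbing primeness of $N$ forces $abm\in N_1$ or $cm\in N_1$, contradicting the defining property of a classical 1-quadruple-zero.

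For $(2)\Rightarrow(1)$, suppose $(0,0)\neq abc(m_1,m_2)\in N$ for some nonunits $a,b,c\in R$ and $(m_1,m_2)\in M$. The second coordinate belongs to $M_2$ automatically, so it suffices to produce $abm_1\in N_1$ or $cm_1\in N_1$. If $abcm_1\neq 0$, then since $abcm_1\in N_1$ and $N_1$ is weakly classical 1-absorbing prime, we are done. If $abcm_1=0$, I would assume for contradiction that both $abm_1\notin N_1$ and $cm_1\notin N_1$; then $(a,b,c,m_1)$ is a classical 1-quadruple-zero of $N_1$, and the second hypothesis of (2) gives $abc\in Ann_R(M_2)$, hence $abcm_2=0$. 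But then $abc(m_1,m_2)=(0,0)$, contradicting our initial choice.

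The main obstacle is identifying the precise form of the extra condition needed in (2). The weakly classical 1-absorbing property of $N_1$ alone is not enough, because when the product $abc(m_1,m_2)$ is nonzero only due to its second coordinate, we cannot invoke the property on $N_1$ directly; the classical 1-quadruple-zero condition is designed to capture exactly this failure in the first coordinate, and the annihilator hypothesis shuts down the possibility that $M_2$ supplies the nonzero contribution. The rest is a routine case split, with care taken that the nonunit status of $a,b,c$ and the nonzero condition are preserved when we move between $M$ and $M_1$.
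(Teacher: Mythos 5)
Your proposal is correct and follows essentially the same route as the paper: the embedding $(m,0)$ and the element $(m,m_2)$ with $abcm_2\neq 0$ for $(1)\Rightarrow(2)$, and the case split on whether $abcm_1$ vanishes, with the quadruple-zero/annihilator hypothesis ruling out the bad case, for $(2)\Rightarrow(1)$. No gaps.
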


\begin{proof}
$(1)\Longrightarrow(2):\ $Suppose that $N=N_{1}\times M_{2}$ is a weakly
classical 1-absorbing prime submodule of $M=M_{1}\times M_{2}$. Let $0\neq
abcm\in N_{1}\ $for some nonunits $a,b,c\in R$ and $m\in M_{1}.\ $Then we have
$(0,0)\neq abc(m,0)=(abcm,0)\in N.\ $Since $N\ $is a weakly classical
1-absorbing prime submodule of $M,\ $we have $ab(m,0)=(abm,0)\in N$ or
$c(m,0)=(cm,0)\in N.\ $This implies that $abm\in N_{1}\ $or $cm\in N_{1}%
.\ $Thus, $N_{1}$ is a weakly classical 1-absorbing prime submodule of
$M_{1}.\ $Now, assume that $(a,b,c,m)$ is a classical 1-quadruple-zero of
$N_{1}.\ $Then we have $abcm=0$, $abm\notin N_{1}\ $and $cm\notin N_{1}%
.\ $Now, we will show that $abc\in Ann_{R}(M_{2}).\ $Suppose that $abc\notin
Ann_{R}(M_{2}).\ $Then there exists $n\in M_{2}$ such that $abcn\neq0.\ $This
gives $(0,0)\neq abc(m,n)=(0,abcn)\in N.\ $As $N\ $is a weakly classical
1-absorbing prime, we have $ab(m,n)\in N$ or $c(m,n)\in N.\ $Which implies
that $abm\in N_{1}$ or $cm\in N_{1}.\ $Both of them are contradictions. Thus,
$abc\in Ann_{R}(M_{2}).$

$(2)\Longrightarrow(1):\ $Let $a,b,c\in R$ be nonunits and $\left(
m,n\right)  \in M=M_{1}\times M_{2}$ be such that $\left(  0,0\right)  \neq
abc\left(  m,n\right)  \in N=N_{1}\times M_{2}$. First assume that $abcm\neq
0$. Then by part (2), $abm\in N_{1}$ or $cm\in N_{1}$. So, we conclude that
$ab\left(  m,n\right)  \in N$ or $c\left(  m,n\right)  \in N.\ $This shows
that $N\ $is a weakly classical 1-absorbing prime. So assume that$\ abcm=0$.
If $abm\notin N_{1}$ and $cm\notin N_{1},\ $then $(a,b,c,m)$ is a classical
1-quadruple-zero of $N_{1}.\ $Then by part (2), $abcn=0\ $and so
$abc(m,n)=(0,0)$ which is a contradiction. Thus, we have $abm\in N_{1}$ or
$cm\in N_{1}.$ A similar argument shows that $ab(m,n)\in N$ or $c(m,n)\in
N\ $which completes the proof.
\end{proof}

\begin{proposition}
\label{pro7} Let $M_{1},M_{2}$ be two $R$-modules and $N_{1},N_{2}$ be two
proper submodules of $M_{1}$ and $M_{2},$ respectively. If $N=N_{1}\times
N_{2}$ is a weakly classical 1-absorbing prime submodule of $M=M_{1}\times
M_{2}$, then $N_{1}$ is a weakly classical 1-absorbing prime submodule of
$M_{1}$ and $N_{2}$ is a weakly classical 1-absorbing prime submodule of
$M_{2}$.
\end{proposition}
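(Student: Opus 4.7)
The strategy is completely straightforward: by symmetry I only need to argue that $N_1$ is a weakly classical 1-absorbing prime submodule of $M_1$, since the argument for $N_2$ is identical. The plan is to take the hypothesized ``bad triple'' in $M_1$ and lift it to $M = M_1 \times M_2$ via a trivial second coordinate, then apply the hypothesis on $N$, and project back.

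In more detail, I would start by fixing nonunits $a,b,c \in R$ and an element $m_1 \in M_1$ with $0 \neq abcm_1 \in N_1$, and consider the element $(m_1, 0) \in M$. Then I would observe that
\[
abc\,(m_1, 0) = (abcm_1, 0) \in N_1 \times N_2 = N,
\]
since $abcm_1 \in N_1$ and $0 \in N_2$. Moreover this element is nonzero in $M$ because its first coordinate $abcm_1$ is nonzero by hypothesis. Applying the fact that $N$ is weakly classical 1-absorbing prime, I obtain either $ab(m_1,0) = (abm_1, 0) \in N$ or $c(m_1, 0) = (cm_1, 0) \in N$. Projecting onto the first coordinate yields $abm_1 \in N_1$ or $cm_1 \in N_1$, as required.

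An entirely symmetric argument (using the lift $(0, m_2)$ for a given element $m_2 \in M_2$ with $0 \neq abcm_2 \in N_2$) shows that $N_2$ is a weakly classical 1-absorbing prime submodule of $M_2$. I would also mention at the start that $N_1$ and $N_2$ are proper by assumption, so that the properness requirement in the definition is automatic. There is no real obstacle here: the content is just the observation that the lifting $m_1 \mapsto (m_1, 0)$ embeds any witness to failure of the property in $M_1$ into a witness to failure of the property in $M$, and preserves nonvanishing of the relevant scalar multiple.
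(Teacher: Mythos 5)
Your proof is correct and is exactly the lifting argument the paper intends: the published proof simply says the claim follows ``by a similar argument in the previous theorem,'' whose proof uses the same embedding $m_1\mapsto(m_1,0)$ to transfer a nonzero product $abcm_1\in N_1$ to a nonzero product in $N$. Nothing further is needed.
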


\begin{proof}
One can easily verify the claim by a similar argument in the previous theorem.
\end{proof}

The following example shows that the converse of previous proposition is not
true in general.

\begin{example}
Let $R=%
\mathbb{Z}
$, $M=%
\mathbb{Z}
^{2}$ and $N=p%
\mathbb{Z}
\times q%
\mathbb{Z}
$ where $p,q$ are two distinct prime integers. Since, $p%
\mathbb{Z}
$, $q%
\mathbb{Z}
$ are prime ideals of $%
\mathbb{Z}
$, then $p%
\mathbb{Z}
$, $q%
\mathbb{Z}
$ are weakly classical 1-absorbing prime $%
\mathbb{Z}
$-submodules of $%
\mathbb{Z}
$. Notice that $\left(  0,0\right)  \neq ppq\left(  1,1\right)  =\left(
ppq,ppq\right)  \in N$, but neither $pp\left(  1,1\right)  \in N$ nor
$q\left(  1,1\right)  \in N$. So, $N$ is not a weakly classical 1-absorbing
prime submodule of $M$.
\end{example}

Let $R_{i}$be a commutative ring with identity and $M_{i}$ be an $R_{i}%
$-module for every $i=1,2.$ Let $R=R_{1}\times R_{2}$. Then $M=M_{1}\times
M_{2}$ is an $R$-module with componentwise addition and scalar multiplication.
Also each submodule of $M$ is in the form of $N=N_{1}\times N_{2}$ for some
submodules $N_{1}$ of $M_{1}$ and $N_{2}$ of $M_{2}$.

\begin{theorem}
\label{tcar2}Let $M_{i}$ be a faithful multiplication $R_{i}$-module such that
$Ann_{R}(M_{i})$ is not unique maximal ideal of $R_{i}\ $for each $i=1,2$ $.$
Suppose that $R=R_{1}\times R_{2}\ $is a decomposable ring and $M=M_{1}\times
M_{2}.\ $Further, assume that $N=N_{1}\times N_{2}\neq0\ $for some submodule
$N_{i}\ $of $M_{i}.\ $The following statements are equivalent.

(1)\ $N\ $is a weakly classical 1-absorbing prime submodule of $M.$

(2)\ $N=N_{1}\times M_{2}\ $for some classical prime submodule $N_{1}\ $of
$M_{1}\ $or $N=M_{1}\times N_{2}\ $for some classical prime submodule
$N_{2}\ $of $M_{2}.$

(3)\ $N$ is a classical prime submodule of $M.$

(4)\ $N\ $is a weakly classical prime submodule of $M.\ $
\end{theorem}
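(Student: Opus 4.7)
The plan is to close the cycle $(1) \Rightarrow (2) \Rightarrow (3) \Rightarrow (4) \Rightarrow (1)$, with only $(1) \Rightarrow (2)$ requiring real work. The implication $(3) \Rightarrow (4)$ is tautological (the weakly-classical-prime condition is obtained from the classical-prime one by restricting to $0 \neq abm$), $(4) \Rightarrow (1)$ is Proposition \ref{pro1}(2), and $(2) \Rightarrow (3)$ is a direct componentwise check: if $N = N_1 \times M_2$ and $(r_1, r_2)(s_1, s_2)(m_1, m_2) \in N$, the condition reduces to $r_1 s_1 m_1 \in N_1$, and classical primeness of $N_1$ yields $r_1 m_1 \in N_1$ or $s_1 m_1 \in N_1$, which lifts to $(r_1, r_2)(m_1, m_2) \in N$ or $(s_1, s_2)(m_1, m_2) \in N$; the symmetric case $N = M_1 \times N_2$ is identical.

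For the essential direction $(1) \Rightarrow (2)$, set $I_i = (N_i :_{R_i} M_i)$. Faithful multiplication of each $M_i$ gives $N_i = I_i M_i$, with $I_i = 0$ iff $N_i = 0$ and $I_i = R_i$ iff $N_i = M_i$. The module $M = M_1 \times M_2$ is itself faithful multiplication over $R = R_1 \times R_2$, and $(N :_R M) = I_1 \times I_2$, so Lemma \ref{lemfaith} promotes $I_1 \times I_2$ to a weakly 1-absorbing prime ideal of $R$.

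I claim $I_1 = R_1$ or $I_2 = R_2$. Suppose for contradiction that both are proper. Since $N \neq 0$ I may assume $I_1 \neq 0$ and pick $0 \neq a_1 \in I_1$, which is automatically a nonunit of $R_1$. Test the triple
\[
x = (1, 0), \quad y = (1, 0), \quad z = (a_1, 1)
\]
of nonunits of $R$ against $I_1 \times I_2$: one computes $xyz = (a_1, 0)$, which is nonzero and lies in $I_1 \times I_2$, yet $xy = (1, 0) \notin I_1 \times I_2$ (since $I_1 \neq R_1$) and $z = (a_1, 1) \notin I_1 \times I_2$ (since $I_2 \neq R_2$), contradicting weakly 1-absorbing primeness. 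Hence, up to a symmetric swap, $N_2 = M_2$.

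It remains to show $N_1$ is classical prime in $M_1$. Let $abm \in N_1$ with $a, b \in R_1$ nonunits (the unit case is trivial) and $m \in M_1$. Because $R_2$ is not a field (per the hypothesis that $Ann_R(M_2)$ is not the unique maximal ideal of $R_2$), I pick a nonzero nonunit $s_2 \in R_2$; because $M_2$ is faithful, I pick $m_2 \in M_2$ with $s_2 m_2 \neq 0$. Then the nonunits
\[
a' = (a, 1), \quad b' = (1, s_2), \quad c' = (b, 1)
\]
and $m' = (m, m_2) \in M$ satisfy $a'b'c'm' = (abm, s_2 m_2) \in N \setminus \{0\}$, so hypothesis (1) gives $a'b'm' = (am, s_2 m_2) \in N$ or $c'm' = (bm, m_2) \in N$, i.e., $am \in N_1$ or $bm \in N_1$. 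The main obstacle throughout is choosing the two test-triples so that the second coordinate either blocks membership in $I_1 \times I_2$ (at the ideal level) or activates the nonzero-product clause (at the module level); both tricks hinge on a nonzero nonunit in $R_2$ and on faithfulness of $M_2$.
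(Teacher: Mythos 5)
Your proof is correct and follows essentially the same route as the paper: reduce via Lemma \ref{lemfaith} to the weakly 1-absorbing prime ideal $(N:_R M)=I_1\times I_2$, conclude that one factor is the whole ring, and then use a nonzero nonunit of the other ring together with faithfulness to force the nonzero-product clause and extract classical primeness of the surviving component. The only (harmless) differences are that you replace the paper's citation of the proof of Theorem 10 of Ko\c{c}--Tekir--Y\i ld\i z by the explicit test triple $(1,0),(1,0),(a_1,1)$, and you verify $(2)\Rightarrow(3)$ componentwise instead of quoting Theorem 2.35 of Mostafanasab et al.
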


\begin{proof}
$(1)\Rightarrow(2):\ $Suppose that $N=N_{1}\times N_{2}\ $is a weakly
classical 1-absorbing prime submodule of $M.$\ Since $M_{1},M_{2}\ $are
faithful modules, so is $M.\ $As $N\ $is a weakly classical 1-absorbing prime
submodule, by Lemma \ref{lemfaith}, $(N:_{R}M)\ $is a weakly 1-absorbing prime
ideal of $R.\ $Since $N\ $is nonzero and $M\ $is faithful multiplication
module, $(N:_{R}M)=(N_{1}:_{R_{1}}M_{1})\times(N_{2}:_{R_{2}}M_{2})$ is
nonzero. Then by the proof of \cite[Theorem 10]{Eda}, we have $(N_{1}:_{R_{1}%
}M_{1})=R_{1}$ or $(N_{2}:_{R_{2}}M_{2})=R_{2}.\ $Without loss of generality,
we may assume that $N=N_{1}\times M_{2}\ $for some proper submodule $N_{1}%
\ $of $M_{1}.\ $Now, we will show that $N_{1}\ $is a classical prime submodule
of $M_{1}.\ $Let $abm\in N_{1}$ for some $a,b\in R_{1}\ $and $m\in M_{1}%
.\ $Since $Ann_{R}(M_{2})$ is not unique maximal ideal of $R_{2},\ $there
exists a nonunit $t\notin Ann_{R}(M_{2}).\ $Then there exists $n\in M_{2}%
\ $such that $tn\neq0.\ $Then we have $0_{M}\neq
(abm,tn)=(a,1)(1,t)(b,1)(m,n)\in N.\ $As $N\ $is a weakly classical
1-absorbing prime submodule of $M,\ $we have either $(a,1)(1,t)(m,n)\in N$ or
$(b,1)(m,n)\in N$ which implies that $am\in N_{1}\ $or $bm\in N_{1}.\ $Thus,
$N_{1}\ $is a classical prime submodule of $M_{1}.\ $

$(2)\Rightarrow(3):\ $Follows from \cite[Theorem 2.35]{Mostafanasab}.

$(3)\Rightarrow(4)\Rightarrow(1):\ $Clear.
\end{proof}

\begin{theorem}
\label{theo16} Let $R$ be a um-ring and $M$ be an $R$-module.

(1)\ If $F$ is a flat $R$-module and $N$ is a weakly classical 1-absorbing
prime submodule of $M$ such that $F\otimes N\neq F\otimes M$, then $F\otimes
N$ is a weakly classical 1-absorbing prime submodule of $F\otimes M$.

(2) Suppose that $F$ is a faithfully flat $R$-module. Then $N$ is a weakly
classical 1-absorbing prime submodule of $M$ if and only if $F\otimes N$ is a
weakly classical 1-absorbing prime submodule of $F\otimes M$.
\end{theorem}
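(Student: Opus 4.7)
The plan is to prove both parts by transporting the equivalent formulation of weakly classical 1-absorbing primeness given in Theorem \ref{tmain2}(2) through the tensor product, using Azizi's identity $(F\otimes N:_{F\otimes M} a)=F\otimes(N:_M a)$ from \cite[Lemma 3.2]{Azizi}. The crucial point is that because $R$ is a $um$-ring, weakly classical 1-absorbing primeness is captured by a pointwise equality of residual submodules (rather than a three-term union), and this equality is the kind of statement that behaves well under tensoring with a flat module.

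For part (1), I would start with $N$ a weakly classical 1-absorbing prime submodule of $M$ and fix arbitrary nonunits $a,b,c\in R$. By Theorem \ref{tmain2}(2), one of the three equalities
\[
(N:_M abc)=(N:_M ab),\quad (N:_M abc)=(0:_M abc),\quad (N:_M abc)=(N:_M c)
\]
holds in $M$. Tensoring both sides of whichever equality holds with the flat module $F$ preserves equality, and applying \cite[Lemma 3.2]{Azizi} to $N$ and to the zero submodule converts each side to the corresponding residual in $F\otimes M$. This yields
\[
(F\otimes N:_{F\otimes M} abc)\in\bigl\{(F\otimes N:_{F\otimes M} ab),\ (0:_{F\otimes M} abc),\ (F\otimes N:_{F\otimes M} c)\bigr\}.
\]
Since the same trichotomy now holds for $F\otimes N\subseteq F\otimes M$ (which is a genuine submodule inclusion by flatness, and proper by hypothesis), Theorem \ref{tmain2}(2) applied to the $R$-module $F\otimes M$ gives that $F\otimes N$ is weakly classical 1-absorbing prime.

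For part (2), the forward direction is immediate from (1) once we check $F\otimes N\neq F\otimes M$: since $N\subsetneq M$, faithful flatness of $F$ forces $F\otimes N\neq F\otimes M$ (otherwise $F\otimes(M/N)=0$ would give $M/N=0$). For the converse, I would run the argument of (1) in reverse: assuming $F\otimes N$ is weakly classical 1-absorbing prime in $F\otimes M$, Theorem \ref{tmain2}(2) gives the trichotomy for the residuals in $F\otimes M$, and Azizi's lemma rewrites this as an equality of the form $F\otimes X=F\otimes Y$ for appropriate submodules $X,Y$ of $M$. Faithful flatness of $F$ then descends this equality to $X=Y$ in $M$, recovering one of the three equalities of Theorem \ref{tmain2}(2) for $N$ in $M$. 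Also $N\neq M$ since otherwise $F\otimes N=F\otimes M$, contradicting properness. Hence $N$ is weakly classical 1-absorbing prime in $M$.

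The only non-routine step is justifying the descent of equality of submodules under faithful flatness; this follows from the standard fact that for any short exact sequence $0\to X\cap Y\to X\oplus Y\to X+Y\to 0$ of submodules of a module, tensoring with a faithfully flat module preserves the sequence, so $F\otimes X=F\otimes Y$ inside $F\otimes M$ implies $X=Y$ inside $M$. Once this descent is in hand, both directions of (2) are symmetric applications of Theorem \ref{tmain2}(2) together with \cite[Lemma 3.2]{Azizi}, and no further computation is needed.
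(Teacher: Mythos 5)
Your proposal is correct and follows essentially the same route as the paper: both parts are proved by transporting the trichotomy of Theorem \ref{tmain2}(2) through $F\otimes-$ via Azizi's identity $(F\otimes N:_{F\otimes M}a)=F\otimes(N:_{M}a)$, with faithful flatness used to descend the resulting equalities in part (2). The only cosmetic difference is your descent lemma via $0\to X\cap Y\to X\oplus Y\to X+Y\to 0$; the paper gets the same conclusion more directly since the relevant submodules are always nested, so $F\otimes X=F\otimes Y$ with $X\subseteq Y$ descends immediately by faithful flatness applied to $0\to X\to Y\to 0$.
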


\begin{proof}
$(1):$ Let $a,b,c\in R$ be nonunits. Then we get by Theorem \ref{tmain2},
either $\left(  N:_{M}abc\right)  =\left(  0:_{M}abc\right)  $ or $\left(
N:_{M}abc\right)  =\left(  N:_{M}ab\right)  $ or $\left(  N:_{M}abc\right)
=\left(  N:_{M}c\right)  $. Assume that $\left(  N:_{M}abc\right)  =\left(
0:_{M}abc\right)  $ . Then by \cite[Lemma 3.2]{Azizi}, we have $\left(
F\otimes N:_{F\otimes M}abc\right)  =F\otimes\left(  N:_{M}abc\right)
=F\otimes\left(  0:_{M}abc\right)  =\left(  F\otimes0:_{F\otimes M}abc\right)
=\left(  0:_{F\otimes M}abc\right)  $. Now, suppose that $\left(
N:_{M}abc\right)  =\left(  N:_{M}ab\right)  $. Again by \cite[Lemma
3.2]{Azizi}, we have $\left(  F\otimes N:_{F\otimes M}abc\right)
=F\otimes\left(  N:_{M}abc\right)  =F\otimes\left(  N:_{M}ab\right)  =\left(
F\otimes N:_{F\otimes M}ab\right)  $. Similarly, we can show that if
\ $\left(  N:_{M}abc\right)  =\left(  N:_{M}c\right)  $, then $\left(
F\otimes N:_{F\otimes M}abc\right)  =$ $\left(  F\otimes N:_{F\otimes
M}c\right)  $. Consequently, by Theorem \ref{tmain2}, we deduce that $F\otimes
N$ is a weakly classical 1-absorbing prime submodule of $F\otimes M$.

$(2):$ Let $N$ be a weakly classical 1-absorbing prime submodule of $M$\ and
assume $F\otimes N=F\otimes M$. Then $0\rightarrow F\otimes N\overset
{\subseteq}{\rightarrow}F\otimes M\rightarrow0$ is an exact sequence. Since
$F$ is a faithfully flat $R$-module, $0\rightarrow N\rightarrow M\rightarrow0$
is an exact sequence. So $N=M$ , which is a contradiction. Thus we have
$F\otimes N\neq F\otimes M$. Then $F\otimes N$ is a weakly classical
1-absorbing prime submodule of $F\otimes M$ by (1). Now for the converse, let
$F\otimes N$ be a weakly classical 1-absorbing prime submodule of $F\otimes
M$. Then we have $F\otimes N\neq F\otimes M$ and so $N\neq M$ . Let $a,b,c\in
R$ be nonunits. Then $\left(  F\otimes N:_{F\otimes M}abc\right)  =\left(
0:_{F\otimes M}abc\right)  $ or $\left(  F\otimes N:_{F\otimes M}abc\right)
=\left(  F\otimes N:_{F\otimes M}ab\right)  $ or $\left(  F\otimes
N:_{F\otimes M}abc\right)  =\left(  F\otimes N:_{F\otimes M}c\right)  $ by
Theorem \ref{tmain2}. Assume that $\left(  F\otimes N:_{F\otimes M}abc\right)
=\left(  0:_{F\otimes M}abc\right)  $. Hence $F\otimes\left(  N:_{M}%
abc\right)  =\left(  F\otimes N:_{F\otimes M}abc\right)  =\left(  0:_{F\otimes
M}abc\right)  =F\otimes\left(  0:_{M}abc\right)  $. So $0\rightarrow
F\otimes\left(  0:_{M}abc\right)  \overset{\subseteq}{\rightarrow}%
F\otimes\left(  N:_{M}abc\right)  \rightarrow0$ is exact sequence. Since $F$
is a faithfully flat $R$-module $0\rightarrow\left(  0:_{M}abc\right)
\overset{\subseteq}{\rightarrow}\left(  N:_{M}abc\right)  \rightarrow0$ is an
exact sequence which implies that $\left(  N:_{M}abc\right)  =\left(
0:_{M}abc\right)  $. By a similar argument, we can deduce that if $\left(
F\otimes N:_{F\otimes M}abc\right)  =\left(  F\otimes N:_{F\otimes
M}ab\right)  $ or $\left(  F\otimes N:_{F\otimes M}abc\right)  =\left(
F\otimes N:_{F\otimes M}c\right)  ,$ then $\left(  N:_{M}abc\right)  =\left(
N:_{M}ab\right)  $ or $\left(  N:_{M}abc\right)  =\left(  N:_{M}c\right)  $.
Consequently, $N$ is a weakly classical 1-absorbing prime submodule of $M$ by
Theorem \ref{tmain2}.
\end{proof}

\begin{corollary}
Let $R$ be an um-ring, $M$ \ be an $R$-module, and $x$ be an indeterminate. If
$N$ is a weakly classical 1-absorbing prime submodule of $M$, then $N\left[
x\right]  $ is a weakly classical 1-absorbing prime submodule of $M\left[
x\right]  $.
\end{corollary}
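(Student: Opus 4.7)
The plan is to derive this as a direct application of Theorem \ref{theo16}(1) with $F = R[x]$, viewed as an $R$-module via the natural inclusion $R \hookrightarrow R[x]$. So the strategy has two parts: first, verify that $R[x]$ is an appropriate flat $R$-module, and second, identify the tensor products $R[x] \otimes_R M$ and $R[x] \otimes_R N$ with $M[x]$ and $N[x]$ respectively so that Theorem \ref{theo16}(1) can be applied.

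First I would note that $R[x]$ is a free $R$-module with basis $\{1, x, x^2, \ldots\}$, and hence is flat as an $R$-module. Next, I would invoke the standard natural isomorphism
\[
R[x] \otimes_R M \;\cong\; M[x],
\]
defined on simple tensors by $\bigl(\sum_i r_i x^i\bigr) \otimes m \mapsto \sum_i (r_i m) x^i$. Under this identification, the image of $R[x] \otimes_R N$ inside $M[x]$ is precisely $N[x]$, the submodule of polynomials whose coefficients lie in $N$. Since $N$ is a proper submodule of $M$, any element $m \in M \setminus N$ yields the constant polynomial $m \in M[x] \setminus N[x]$, so $R[x] \otimes_R N \neq R[x] \otimes_R M$.

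With these identifications in hand, Theorem \ref{theo16}(1) applied to the flat $R$-module $F = R[x]$ gives immediately that $N[x]$ is a weakly classical 1-absorbing prime submodule of $M[x]$. The hypothesis that $R$ is a um-ring is needed only to invoke Theorem \ref{theo16}, and the hypothesis that $N$ is a weakly classical 1-absorbing prime submodule of $M$ is precisely what Theorem \ref{theo16}(1) requires. Thus no further submodule-chasing in $M[x]$ is necessary.

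The only substantive point to check is the compatibility of the tensor-product identification with the residual computations used in Theorem \ref{theo16}; this amounts to the verification that $(N[x] :_{M[x]} r) = (N :_M r)[x]$ for $r \in R$, which follows coefficient-by-coefficient from the freeness of $R[x]$ over $R$ (equivalently, from \cite[Lemma 3.2]{Azizi} applied to $F = R[x]$). This is the only potentially delicate step, but it is routine once the isomorphism $R[x] \otimes_R M \cong M[x]$ is fixed.
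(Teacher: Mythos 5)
Your proof is correct and follows essentially the same route as the paper: both apply Theorem \ref{theo16}(1) with $F = R[x]$, using that $R[x]$ is free (hence flat) over $R$ and the identifications $R[x]\otimes_R M \cong M[x]$ and $R[x]\otimes_R N \cong N[x]$. Your additional checks that $R[x]\otimes_R N \neq R[x]\otimes_R M$ and that the residuals behave well under the identification are sensible details the paper leaves implicit.
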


\begin{proof}
Assume that $N$ is a weakly classical 1-absorbing prime submodule of $M$.
Notice that $R\left[  x\right]  $ is a flat $R$-module. So by Theorem
\ref{theo16}, $N\left[  x\right]  \simeq R\left[  x\right]  \otimes N$ is a
weakly classical 1-absorbing prime submodule of $M[x]\simeq$ $R\left[
x\right]  \otimes M$.
\end{proof}

Now, we investigate the modules over which every proper ideal is weakly
classical 1-absorbing prime.

\begin{theorem}
\label{tfinal}Let $M\ $be an $R$-module. The following statements are satisfied.

(1) If every proper submodule of $M\ $is a weakly classical 1-absorbing prime.
Then $Jac(R)^{3}M=0.$

(2)\ Suppose that $(R,\mathfrak{m})$ is a local ring. Then every proper
submodule of $M\ $is a weakly classical 1-absorbing prime if and only if
$\mathfrak{m}^{3}M=0.\ $
\end{theorem}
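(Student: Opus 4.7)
The plan is to prove part (1) by contraposition: assume $Jac(R)^{3}M \neq 0$ and produce a proper submodule of $M$ that fails to be weakly classical 1-absorbing prime. Pick $a,b,c \in Jac(R)$ and $m \in M$ with $abcm \neq 0$; since every element of $Jac(R)$ is a nonunit (it lies in every maximal ideal), the triple $a,b,c$ qualifies as the nonunits required by the definition. My candidate submodule is $N := R(abcm)$, the cyclic submodule generated by $abcm$.

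First I would verify that $N$ is proper. If $N = M$, then $m = r \cdot abcm$ for some $r \in R$, so $(1-rabc)m = 0$; because $rabc \in Jac(R)$, the element $1-rabc$ is a unit, forcing $m = 0$ and hence $abcm = 0$, contradicting the choice of $m$. Next, to show that $N$ fails the weakly classical 1-absorbing prime property, I must rule out both $abm \in N$ and $cm \in N$. This is where the key trick gets used twice: if $abm = r \cdot abcm$, then $(1-rc)abm = 0$ with $1-rc$ a unit (since $rc \in Jac(R)$), so $abm = 0$ and hence $abcm = c \cdot abm = 0$, a contradiction; symmetrically, $cm \in N$ forces $cm = 0$, and then $abcm = ab \cdot cm = 0$. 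So $0 \neq abcm \in N$ while $abm \notin N$ and $cm \notin N$, meaning the proper submodule $N$ is not weakly classical 1-absorbing prime, contradicting the hypothesis of (1).

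For part (2), since $(R,\mathfrak{m})$ is local we have $Jac(R) = \mathfrak{m}$ and the nonunits of $R$ are exactly the elements of $\mathfrak{m}$. The forward direction is immediate from part (1). For the converse, assume $\mathfrak{m}^{3}M = 0$ and let $N$ be any proper submodule of $M$. For any nonunits $a,b,c \in R$ and any $m \in M$ we have $abc \in \mathfrak{m}^{3}$, so $abcm \in \mathfrak{m}^{3}M = 0$. Hence the premise ``$0 \neq abcm \in N$'' in the definition is never satisfied, and $N$ is vacuously weakly classical 1-absorbing prime.

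The main obstacle is the heart of part (1): arranging simultaneously that $R(abcm)$ is proper and that neither $abm$ nor $cm$ lies in it. All three facts rest on the same mechanism, namely that an equation of the form $x = r \cdot abcm$ can be rewritten as $(1-ry)x = 0$ where $y$ is a product drawn from $\{a,b,c\}$ and therefore lies in $Jac(R)$, making $1-ry$ a unit and forcing $x = 0$. Recognizing this uniform use of the unit-complement characterization of the Jacobson radical is the insight that drives the whole argument.
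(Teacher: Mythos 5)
Your proof is correct and follows essentially the same route as the paper: both take $N = R(abcm)$ for a nonvanishing product $abcm$ with $a,b,c \in Jac(R)$ and exploit that $1 - r$ is a unit whenever $r \in Jac(R)$ to force $abm \notin N$ and $cm \notin N$. Your contrapositive phrasing also explicitly verifies that $N$ is proper, a detail the paper's proof leaves implicit; part (2) is handled identically in both.
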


\begin{proof}
$(1):\ $Suppose that every proper submodule of $M\ $is a weakly classical
1-absorbing prime. Now, we will show that $Jac(R)^{3}M=0.$ Suppose to the
contrary. Then there exist $x,y,z\in Jac(R)$ and $m\in M$ such that
$xyzm\neq0.\ $Now, put $N=Rxyzm.\ $Since $0\neq xyzm\in N\ $and $N\ $is a
weakly classical 1-absorbing prime submodule, we have either $xym\in N$ or
$zm\in N.\ $Then there exists $a\in R$ such that $xym=axyzm$ or $zm=axyzm.\ $%
In the first case, we have $(1-az)xym=0.\ $Since $1-az\ $is unit, we conclude
that $xym=0.\ $In the second case, one can similarly show that $zm=0.\ $Thus,
we conclude, $xyzm=0\ $which is a contradiction. Hence, $Jac(R)^{3}M=0.$

$(2):\ $Suppose that $(R,\mathfrak{m})$ is a local ring. If every proper
submodule of $M\ $is a weakly classical 1-absorbing prime, then by (1),
$\mathfrak{m}^{3}M=0.\ $For the converse, note that $xyz=0\ $for every
nonunits $x,y,z\in R.\ $In this case, every proper submodule of $M\ $is
trivially weakly classical 1-absorbing prime submodule.
\end{proof}

\end{document}